\documentclass{amsart}

\usepackage{amsmath,amsthm,amsfonts,amssymb,amscd}
\usepackage[all]{xy}
\usepackage{enumerate,verbatim,pdfsync}
\usepackage[english,francais]{babel}
\usepackage[dvips]{graphicx}
\usepackage[T1]{fontenc}
\usepackage[latin1]{inputenc}
\usepackage{ae}

{\theoremstyle{plain}
\newtheorem{theorem}{Theorem}[section]
\newtheorem{proposition}{Proposition}[section]
\newtheorem{lemma}{Lemma}[section]

\newtheorem{remark}{Remark}[section]
\newtheorem{corollary}{Corollary}[section]
}
\newcommand{\nc}{\newcommand}
\nc {\hh}{\check{h}}
\nc {\DD}{\mathbb{D}}
\nc {\CC}{\mathbb{C}}
\nc {\Pp}{\mathbb{P}}
\nc {\Ss}{\mathcal{S}}
\nc {\PP}{\mathbb{P}^{2}}
\nc {\Dx}{\frac{\partial}{\partial x}}
\nc {\Dy}{\frac{\partial}{\partial y}}
\nc {\Pd}{ \check{\mathbb{P}}^{2}}
\nc {\WW}{\mathcal{W}}
\nc {\Sym}{\mathrm{Sym}}
\nc {\Sing}{\mathrm{Sing}\,}
\nc {\Leg}{\mathrm{Leg}\,}
\nc {\Tang}{\mathrm{Tang}\,}
\nc {\OO}{\mathcal{O}}
\nc {\UU}{\mathcal{U}}
\nc {\EE}{\mathcal{E}}
\nc {\MM}{\mathcal{M}}
\nc {\KK}{\mathcal{K}}
\nc {\PW}{\mathcal{P}}
\nc {\NW}{\mathcal{N}_{\WW}}
\nc {\FF}{\mathcal{F}}
\nc {\GG}{\mathcal{G}}
\nc {\ZZ}{\mathcal{Z}}
\nc {\LL}{\mathcal{L}}
\nc {\HH}{\mathcal{H}}
\nc {\NN}{\mathcal{N}}
\nc {\VV}{\mathcal{V}}
\nc {\Ww}{\mathbb{W}}
\nc {\QQ}{\mathbb{Q}}
\nc {\II}{\mathcal{I}}

\setlength{\unitlength}{1cm}

\date{}

\begin{document}

\title[Flat 3-webs of degree one]{Flat 3-webs of degree one on the projective plane}

\author{A. Beltrán$^1$}
\author{M. Falla Luza$^2$}
\author{D. Marín$^3$}

\thanks{1. Departamento de Matemática, Pontificia Universidad Católica del Perú, Av. Universitaria 1801, San Miguel, Lima, Perú, abeltra@pucp.edu.pe }

\thanks{2. Departamento de An\'alise -- IM, Universidade Federal Fluminense, M\'ario Santos Braga s/n -- Niter\'oi, 24.020-140 RJ Brasil,
 maycolfl@gmail.com}
             
\thanks{3. Departament de Matem\`{a}tiques, Universitat Aut\`{o}noma de Barcelona, E-08193  Bellaterra (Barcelona) Spain, davidmp@mat.uab.es} 
\thanks{The first author was partially supported by PUCP, Pontificia Universidad Católica del Perú. The second author was partially supported by PUCP-Perú and CAPES/Mathamsud. The first and third author were partially supported by FEDER / Ministerio de Econom\'{\i}a y Competitividad of Spain, grant MTM2011-26674-C02-01.}

\subjclass{53A60, 14C21, 32S65}

\selectlanguage{francais}
\begin{abstract}
Le but de ce travail est d'étudier les $3$-tissus globaux ayant courbure nulle. En particular, nous nous intéressons aux feuilletages de degré $3$ dont le tissu dual est plat. L'ingrédient principal est la transformée de Legendre, qui est un avatar de la dualité projective classique dans le domaine des équations différentielles. Nous obtenons une characterization des feuilletages de degré $3$ sur le plan projectif dont les tissus duaux ont courbure nulle.
\end{abstract}

\selectlanguage{english}
\begin{abstract}
The aim of this work is to study global $3$-webs with vanishing curvature. We wish to investigate degree $3$ foliations for which their dual web is flat. The main ingredient is the Legendre transform, which is an avatar of classical projective
duality in the realm of differential equations. We find a characterization of degree $3$ foliations whose Legendre transform are webs with zero curvature.

\bigskip
\bigskip
\bigskip

\noindent {\tiny RÉSUMÉ}. \ 
Le but de ce travail est d'étudier les $3$-tissus globaux de courbure nulle. En particulier, nous nous intéressons aux feuilletages de degré $3$ dont le tissu dual est plat. L'ingrédient principal est la transformée de Legendre, qui est un avatar de la dualité projective classique dans le domaine des équations différentielles. Nous obtenons une caractérisation des feuilletages de degré $3$ sur le plan projectif dont les tissus duaux sont de courbure nulle.
\end{abstract}

\maketitle 


\section*{Introduction}

Roughly speaking, a web is a finite family of foliations. The study of web geometry has its birth in the late 1920's with Blaschke and his school. Recently, the study of holomorphic webs globally defined on compact complex manifolds started to be pursued, see for instance \cite{MarPer}, \cite{PiPer} and references therein. 

The main purpose of this paper is to study projective flat $3$-webs of degree one through projective duality. More concretely, we try to clarify and expand the notions and results studied in the paper \cite{MarPer} about degree one webs with the help of the so called Legendre transform (see Section 1.3). This duality associates to any $3$-web of degree one with a degree $3$ foliation in the dual plane.
 
The general philosophy behind our approach (considered for the first time in \cite{CDQL} as far as webs are concerned) is that flatness is characterized by the vanishing of a meromorphic curvature $2$-form having poles along the discriminant of the $3$-web.
In \cite{PiPer,MarPer} it is shown that generic invariant components of the discriminant do not produce poles in the curvature. Following this principe we  decompose the discriminant in two parts: a roughly transverse part coming from the inflection curve of the dual foliation and an invariant one due to the singularities. The last part not being decisive for the flatness, see Theorem~\ref{teorema-principal}.

In the first chapter we introduce our notation and main objects such as the curvature, discriminant, Gauss map and Legendre transform. In Chapter 2 we investigate the relationship between the geometry of a degree $3$ foliation and its dual web, more specifically, we study the geometrical relation between the inflection divisor and singular points of the foliation with the discriminant of the associated web. In Chapter 3 we restrict ourselves to homogeneous foliations of degree $3$ and prove that, for these foliations, only the inflection divisor matters for the flatness of the dual web. In particular we get a classification of generic homogeneous foliations of degree $3$ having flat Legendre transform. Finally, in last chapter we state our main result, showing that if the curvature has no pole on the curve associated to the inflection of the dual foliation, then it vanishes on $\PP$. Finally, using some notions introduced in \cite{MarPer} we  give a complete geometric  characterization of degree $3$ saturated foliations with flat dual web. As an application we obtain that the dual webs of convex foliations are always flat, generalizing a previous result in \cite{MarPer}.


Since we do not want to exclude artificially reducible $3$-webs of degree one from our study, we must also consider the case of foliations with reduced $1$-dimensional singular set. Using the techniques developed in this paper we hope to obtain a fully classification of reducible degree $1$ flat $3$-webs in the near future.

\section{Global foliations, webs and Legendre transform}
\subsection{Local definitions}

A foliation $\FF$ on a complex surface $S$ is locally given by 
an open covering $\{U_{i}\}_{i\in I}$ of $S$ and
holomorphic $1$-forms $\omega_i$ in $U_i$, $i\in I$, such that for each $i,j\in I$ with
  $U_i \cap U_j \neq \emptyset$, we have the relation
$$
\omega_i = g_{ij} \cdot \omega_j
$$
for some non-vanishing function $g_{ij} \in \OO^{*}(U_i \cap U_j)$. The foliation $\FF$ is said to be \textbf{saturated} if the codimension of the zero set of the $1$-forms $\omega_i$ is  $\ge 2$. For a thorough treatment we refer the reader to \cite{Br}.

We will restrict ourselves to the case $S=\Pp^2$. In this case a foliation is defined by a polynomial $1$-form  $a(x,y) dx + b(x,y) dy$ on $\mathbb C^2$. In the same way,  a $k$-web $\WW$ on the projective plane is defined by a $k$-symmetric polynomial $1$-form
\[
\omega = \sum_{i+j = k} a_{ij}(x,y) dx^i dy^j
\]
with non identically zero discriminant. Here, we refer to the discriminant $\Delta(\WW)$ as the set of points where $\omega$ does not factor as the product
of $k$ pairwise linearly independent 1-forms. In fact, $\Delta(\WW)$ can be endowed with a natural divisor structure, see for instance \cite[\S1.3.4]{PiPer}, but we do not use it.
 In more intrinsic terms, a $k$-web on $\Pp^2$ is defined by an element $\omega$ of $H^0(S,\Sym^k \Omega^1_{\Pp^2}\otimes N)$ for a suitable line bundle $N$, still
subjected to the condition above: non-zero discriminant.
It is natural to write $N$ as $\mathcal O_{\mathbb P^2}(d + 2k)$
since the pull-back of $\omega$ to a  line $\ell \subset \mathbb P^2$ will
be a global section of $\Sym^k \Omega^1_{\mathbb P^1} ( d+ 2k) = \mathcal O_{\mathbb P^1}(d)$ and consequently for a generic $\ell$ the integer  $d$, called the \textbf{degree} of the web,  will count the number of tangencies between $\ell$ and the leaves of the $k$-web $\mathcal W$ defined by $\omega$. That said, we promptly
see that the space of $k$-webs on $\mathbb P^2$ of degree $d$ is an open subset of $\mathbb P H^0(\mathbb P^2, \Sym^k \Omega^1_{\mathbb P^2}(d+2k))$, we denote it briefly by $\Ww(k,d)$.

\subsection{Flat webs}\label{1.2}

One of the first results of web geometry, due to Blaschke-Dubour\-dieu,  characterizes the local equivalence of a germ of  $3$-web $\mathcal W$ on $\mathbb C^2$ with the trivial $3$-web defined by $dx \cdot dy \cdot (dx-dy)$ through the vanishing of a differential covariant: the curvature of $\mathcal W$. 
%
%
It is a holomorphic $2$-form $K(\WW)$ which can be easily defined locally in the complement of the discriminant of the $3$-web $\WW$ and satisfies the covariant relation $\varphi^* K ( \mathcal W) = K ( \varphi^* \mathcal W)$ for any holomorphic submersion $\varphi$. This property allows to see that it extends meromorphically to $\Delta(\WW)$. For more details we refer to \cite[\S2.2]{MarPer}.

For a $k$-web $\mathcal W$ with $k>3$, one defines the curvature of $\mathcal W$ as the sum of the curvatures of  all $3$-subwebs of $\mathcal W$. It is again a differential covariant, and to the best of our knowledge there is no result characterizing its  vanishing.  Nevertheless,  according to a result of Mihaileanu, this vanishing is a necessary condition for the maximality of the rank of the web, see \cite{Henaut,Ripoll}  for a full discussion and   pertinent references. 

The $k$-webs with zero curvature are called {\bf flat $k$-webs}. Global flat $k$-webs on $\PP$ of degree $d$ form a Zariski closed subset of $\Ww(k,d)$ and it is our purpose to study flat $3$-webs of degree one through geometrical conditions on the Legendre transform, which we discuss in the next subsection.

Since the curvature of a web $\mathcal W$ on a complex surface is a meromorphic $2$-form with poles contained in the discriminant $\Delta(\mathcal W)$ of
$\mathcal W$ and there are no  holomorphic $2$-forms on
$\mathbb P^2$,
 the curvature of a global  web  $\mathcal W $ on the projective plane is zero if and only if it is holomorphic at the generic points of the irreducible components of $\Delta(\mathcal W)$. In the sequel we will apply the following result which is the particularization of \cite[Theorem~1]{MarPer} to the case $k=3$:
 
\begin{theorem}\label{baricentro}
Let $\WW$ be a $3$-web on $(\CC^{2},0)$ with smooth (but not necessarily reduced), and non empty discriminant. Assume that $\WW$ is the superposition $\WW_{2}\boxtimes\FF$ of a foliation $\FF$ and a $2$-web $\WW_{2}$ satisfying $\Delta(\WW_{2})=\Delta(\WW)$. Then the curvature of $\WW$ is holomorphic along $\Delta(\WW)$ if and only if $\Delta(\WW)$ is invariant by either $\WW_{2}$ or $\FF$.
\end{theorem}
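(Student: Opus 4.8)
The plan is to reduce the statement to a computation of the Blaschke curvature for a genuine $3$-web of foliations on a ramified double cover, and then to read off the polar part of $K(\WW)$ along the discriminant from that computation. First I would normalize coordinates: since $\Delta(\WW)$ is smooth we may assume $\Delta(\WW)=\{x=0\}$, possibly non-reduced as a divisor. Consider the ramified double cover $\pi(t,y)=(t^2,y)$, with deck involution $\iota(t,y)=(-t,y)$ and ramification locus $R=\{t=0\}=\pi^{-1}(\Delta(\WW))$. Because $\Delta(\WW_{2})$ is smooth and equal to $\Delta(\WW)$, the pulled back $2$-web $\pi^*\WW_{2}$ splits as a superposition $\tilde\FF_1\boxtimes\tilde\FF_2$ of two smooth foliations exchanged by $\iota$ and mutually tangent exactly along $R$, while $\tilde\FF:=\pi^*\FF$ is $\iota$-invariant and, by the hypothesis $\Delta(\WW_2)=\Delta(\WW)$, transverse to both $\tilde\FF_i$ along $R$. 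Thus $\pi^*\WW=\tilde\FF_1\boxtimes\tilde\FF_2\boxtimes\tilde\FF$ is a genuine $3$-web of foliations away from $R$.

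Next I would write explicit defining $1$-forms $\tilde\omega_1,\tilde\omega_2,\tilde\omega_3$ for the three foliations, respecting the parities imposed by $\iota$ (so that $\iota^*$ swaps $\tilde\omega_1\leftrightarrow\tilde\omega_2$ and fixes the direction of $\tilde\omega_3$), and rescale them by holomorphic factors $\lambda_i$ so that $\lambda_1\tilde\omega_1+\lambda_2\tilde\omega_2+\lambda_3\tilde\omega_3=0$. The curvature is then $K(\pi^*\WW)=d\gamma$, where $\gamma$ is the unique $1$-form with $d(\lambda_i\tilde\omega_i)=\gamma\wedge(\lambda_i\tilde\omega_i)$ for all $i$ (the covariant recalled in Section~\ref{1.2}). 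The tangency of $\tilde\FF_1$ and $\tilde\FF_2$ along $R$ forces the normalizing factors $\lambda_i$ to blow up there, and this is the sole source of the pole of $\gamma$, hence of $K$. Using the covariance $\pi^*K(\WW)=K(\pi^*\WW)$, valid on the \'etale locus $\{t\neq 0\}$ and extended meromorphically, together with $\pi^*(x^{-1}dx\wedge dy)=2\,t^{-1}dt\wedge dy$, I would translate the pole order of $K(\pi^*\WW)$ along $R$ into that of $K(\WW)$ along $\Delta(\WW)$; in particular $K(\WW)$ is holomorphic along $\Delta(\WW)$ if and only if the polar part of $K(\pi^*\WW)$ along $R$ vanishes.

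The crux is to identify this polar part. I expect that, after collecting terms, its leading coefficient factors, up to a non-vanishing holomorphic function, as a product $T_{\WW_2}\cdot T_{\FF}$ of two holomorphic functions along $R$: the factor $T_{\WW_2}$ measures the tangency of the common direction of $\WW_2$ with $\Delta(\WW)$ and vanishes exactly when $\Delta(\WW)$ is invariant by $\WW_2$, while $T_{\FF}$ measures the tangency of $\FF$ with $\Delta(\WW)$ and vanishes exactly when $\Delta(\WW)$ is invariant by $\FF$. Since a product of holomorphic functions vanishes iff one factor does, this yields the desired dichotomy. Moreover, the transversality of $\FF$ to the fold (forced by $\Delta(\WW_2)=\Delta(\WW)$) guarantees that $T_{\WW_2}$ and $T_{\FF}$ cannot vanish simultaneously at a generic point of $\Delta(\WW)$, so the two cases are mutually exclusive and invariance under one of the two pieces is genuinely necessary.

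The main obstacle is the precise control of the polar part when $\Delta(\WW)$ is non-reduced: a priori $K(\pi^*\WW)$ may have a pole of order higher than one along $R$, so one must show that the full principal part, not merely its leading coefficient, is a holomorphic multiple of the same product $T_{\WW_2}\cdot T_{\FF}$, and that the parity constraints coming from $\iota$ are compatible with descending the conclusion from the cover back to $(\CC^{2},0)$. Carrying out the Laurent expansion of $d\gamma$ in the variable $t$ and exhibiting this factorization is the technical heart of the argument.
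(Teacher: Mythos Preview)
The paper does not give an independent proof of this statement: it is quoted as the specialization to $k=3$ of \cite[Theorem~1]{MarPer}, and the only argument supplied is the remark that for a single foliation $\FF$ the $\WW_2$-barycenter satisfies $\beta_{\WW_2}(\FF)=\FF\boxtimes\FF$, so that invariance of $\Delta(\WW_2)$ by the barycenter reduces to invariance by $\FF$. Your proposal is therefore not comparable to a proof in the paper but rather an attempt at a direct, self-contained argument via a ramified double cover splitting the $2$-web; this is a natural and classical strategy.

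The architecture of your plan is sound, but the decisive step is missing. The claim that the polar part of $K(\pi^*\WW)$ along $R$ factors, up to a unit, as a product $T_{\WW_2}\cdot T_{\FF}$ is exactly the content to be proved, and you only write ``I expect that'' and later flag the Laurent expansion of $d\gamma$ as ``the technical heart of the argument'' without carrying it out. In the reduced case this computation does yield a simple pole whose residue factors as you predict, but in the non-reduced case the pole order grows with the multiplicity and one must control several Laurent coefficients simultaneously; it is not evident a priori that all of them share the same factorization, and that is where the real work lies. One small inaccuracy: the hypothesis $\Delta(\WW_2)=\Delta(\WW)$ does not force $\FF$ to be transverse to the double direction of $\WW_2$ along $\Delta(\WW_2)$, only away from it, so your assertion that $\tilde\FF$ is transverse to both $\tilde\FF_i$ along $R$ is not guaranteed by the stated hypotheses.
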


In fact, in \cite{CDQL} the authors introduce the notion of $\FF$-barycenter $\beta_{\FF}(\WW)$ of a completely decomposable web $\WW$ with respect to a foliation $\FF$, which was extended in \cite{MarPer} to include the case $\FF$ is a web. The general statement of Theorem 1 in \cite{MarPer} is that the holomorphy of $K(\WW_{2}\boxtimes\WW')$ is equivalent to the fact that $\Delta(\WW_{2})$ be invariant by either $\WW_{2}$ or $\beta_{\WW_{2}}(\WW')$. In our situation $\WW'=\FF$ is a foliation and the invariance with respect to the $2$-web $\beta_{\WW_{2}}(\FF)=\FF\boxtimes\FF$ is nothing but the invariance with respect to $\FF$.

\medskip

\subsection{The Legendre transform}

From now we will focus on global $k$-webs of degree $d$ on the projective plane. As it was already mentioned these are determined by a global section $ \omega$ of $\Sym^k \Omega^1_{\mathbb P^2}(d+ 2k)$ having non-zero discriminant. Also, a $k$-web of degree $d$ can be expressed as a global section $X$ of $\Sym^k T\mathbb P^2( d-k)$ subjected to the same condition as above. 
%
%
From Euler's sequence
\[
\begin{array}{ccccccccc}
0 & \to & \mathcal O_{\mathbb P^2} & \longrightarrow  & \mathcal O_{\mathbb P^2}(1)^{\oplus 3} & \longrightarrow  & T{\mathbb P^2}  & \to 0\\
 & & \lambda & \mapsto & (\lambda x,\lambda y,\lambda z) & & & &\\
 & &   &         & (A,B,C) & \mapsto & A\partial_{x}+B\partial_{y}+C\partial_{z} & &
\end{array}
\]
 we can deduce the following exact sequence
\[
0 \to \Sym^{k-1} ( \mathcal O_{\PP} (1)^{\oplus 3} ) \otimes \mathcal O_{\PP} \to  \Sym^{k} ( \mathcal O_{\PP} (1)^{\oplus 3} ) \to \Sym^k T\PP \to 0 \, .
\]
After tensorizing with the line bundle $\mathcal O_{\PP}(d-k)$,
it implies that every global section $X$ of $\Sym^{k}T\PP(d-k)$ is obtained from 
a global section of $\Sym^{k}(\mathcal O_{\PP}(1)^{\oplus 3})\otimes\mathcal O_{\PP}(d-k)$, i.e. a bihomogeneous polynomial $P(x,y,z;a,b,c)$ of degree $d$  in the coordinates $(x,y,z)$ and degree  $k$ in the coordinates $(a,b,c)$, by replacing in it the variables $a,b,c$ by $\partial_{x},\partial_{y},\partial_{z}$ respectively.
Moreover,  two bihomogeneous polynomials $P$ and $P'$ determine the same global section $X$ if and only if they differ by a (necessarily bihomogeneous) multiple of $xa + yb + zc$.


By using homogeneous coordinates  in the dual projective plane $\check{\Pp}^{2}$ which associates to the point
$(a:b:c)\in\check{\Pp}^{2}$ the line $\{  ax+by+cz=0\} \subset \PP$ one can identify the cotangent space of $\PP$ at the point of homogeneous coordinates $(x:y:z)$ with 
\begin{eqnarray*}
T^*_{(x:y:z)}\PP&=&\{\omega=a\,dx+b\,dy+c\,dz\in T^*\mathbb{C}^{3}: \omega(R)=0\}\\ &=&\{a\,dx+b\,dy+c\,dz : ax+by+cz=0\}
\end{eqnarray*}
with the incidence variety
$$\mathcal{I}=\{((x:y:z),(a:b:c)) | ax+by+cz=0\}\subset \PP\times\check{\Pp}^{2}.$$

Let $\mathcal{W}$ be a $k$-web of degree $d$ on $\PP$ defined by the bihomogeneous polynomial $P(x,y,z;a,b,c)$. Then $S_{\mathcal{W}}\subset\Pp T^{*}\PP$, the graph of $\mathcal W$ on $\Pp T^* \PP$,   is given by
$$S_{\mathcal{W}}=\{((x:y:z),(a:b:c))\in\PP\times\check{\Pp}^{2} | ax+by+cz=0, P(x,y,z;a,b,c)=0\}$$
under the above identification between $\mathcal{I}$ and $\Pp T^{*}\PP$.

Suppose $\mathcal W$ is an irreducible web of degree $d>0$ and consider the restrictions  $\pi$ and $\check{\pi}$ to $S_{\mathcal{W}}$ of the natural projections of $\PP\times\check{\Pp}^{2}$ onto $\PP$ and $\check{\Pp}^{2}$ respectively. These projections $\pi$ and $\check{\pi}$ are rational maps of  degrees $k$ and $d$ respectively. The contact distribution $\mathcal{D}$ on $\Pp T^{*}\PP$ given by
$$\mathcal{D}=\ker(a\,dx+b\,dy+c\,dz)=\ker(x\,da+y\,db+z\,dc)$$
defines a foliation $\FF_{\mathcal{W}}$ on $S_{\mathcal{W}}$ which projects through $\pi$ onto the $k$-web $\mathcal{W}$ and by $\check{\pi}$ onto a $d$-web $\check{\mathcal{W}}$ on $\check{\Pp}^{2}$. The $d$-web $\check{\mathcal{W}}$ is called the \textbf{Legendre transform} of $\mathcal{W}$ and it  will be denoted by $\Leg(\mathcal{W})$.

Browsing classical books on ordinary differential equations one can find the Legendre transform as an involutive transformation between polynomial differential equations, see for instance \cite[Part I Chapter II Section 2.5]{Ince}.

Of course we can proceed to define the Legendre transform for arbitrary $k$-webs of  arbitrary degree $d$. Notice that when $\mathcal W$ decomposes as the product of two webs $\mathcal W_1 \boxtimes \mathcal W_2$ then its Legendre transform will be the product of $\Leg(\mathcal W_1)$ with $\Leg(\mathcal W_2)$.

\begin{remark}
Let us fix a generic line $\ell$ on $\PP$ and consider  the tangency locus $\Tang(\mathcal{W},\ell)=\{p_{1},\ldots,p_{d}\}\subset\PP$ between $\WW$ and $\ell$.  We can think $\ell$ as a point of $\check{\Pp}^{2}$ and the dual $\check{p}_{i}$ as straight lines on $\check{\Pp}^{2}$ passing through the point $\ell$. Then the set of tangent lines of $\Leg\WW$ at $\ell$ is just $T_{\ell}\Leg\mathcal{W}=\bigcup\limits_{i=1}^{d}\check{p}_{i}$.
\end{remark}

Consider affine coordinates $(x,y)$ of an affine chart of $\PP$ and an affine chart of $\check{\Pp}^{2}$ whose affine coordinates $(p,q)$ correspond to the line $\{y=px+q\}\subset\PP$. If a web $\WW$ is defined by an implicit affine equation $F(x,y;p)=0$ with $p=\frac{dy}{dx}$ then $\Leg(\WW)$ is defined by the implicit affine equation
$$
\check{F}(p,q;x):=F(x,px+q;p)=0,\qquad\textrm{with}\qquad x=-\frac{dq}{dp}.
$$
In particular, for a foliation  defined by a vector field $A(x,y)\frac{\partial}{\partial x}+B(x,y)\frac{\partial}{\partial y}$  we can take $F(x,y;p)=B(x,y) - pA(x,y)$ and therefore its Legendre transform is given by
\begin{equation}\label{F}
\check{F}(p,q;x)= B(x,px+q) - pA(x,px+q).
\end{equation}
At this point we recall briefly the definition of the \textbf{Gauss map} of a holomorphic foliation $\FF$. When the foliation is saturated, the Gauss map of $\FF$ is the rational map $\GG_{\FF}: \PP \dashrightarrow \Pd$, $\GG_{\FF}(p)=T_{p}\FF$, which is well defined outside the singular set $\Sing(\FF)$ of $\FF$. If the foliation is given by the homogeneous form $\omega = a(x,y,z)dx + b(x,y,z) dy + c(x,y,z) dz$, then the Gauss map can be written as 
$$
\GG_{\FF}(p)=[a(p): b(p): c(p)].
$$
This allows us to define the Gauss map of a non saturated foliation $\FF$ as $\GG_{\FF}:= \GG_{\FF'}$, where $\FF'$ is the saturation of $\FF$.
Moreover, for the purposes of this paper it will be useful to define $\GG_{\FF}(C)$ as  the closure of $\GG_{\FF}(C\setminus\Sing(\FF'))$ if $C$ is a (not necessarily irreducible) curve on $\PP$ passing through some singular points of the saturation $\FF'$ of $\FF$. Notice that $\GG_{\FF}(C)$ is defined merely as a set, i.e. no divisor structure is considered over it in this paper.

\section{Inflection divisor and singularities versus discriminant}

Let $\FF$ be a degree $d$ foliation on $\PP$ given by the degree $d$ homogeneous vector field $X$. The \textbf{inflection divisor} of $\FF$, denoted by $I(\FF)$, is the divisor defined by the vanishing of the discriminant determinant
\begin{equation}\label{inflection}
\det \left( \begin{array}{ccc}
  x & y & z \\
  X(x) & X(y)  & X(z) \\
  X^2(x) & X^2(y)  & X^2(z)
\end{array} \right) \, .
\end{equation}

This divisor has been studied in \cite{Per} in a more general context. In particular, the following properties were proven.
\begin{itemize}
\item[(a)] If the determinant (\ref{inflection}) is identically zero then $\FF$ admits a
rational first integral of degree $1$; that is,  if we suppose that
the singular set of $\FF$ has codimension $2$ then the degree of
$\FF$ is zero;
\item[(b)]  $I(\FF)$ does not depend on a particular choice of a system of homogeneous coordinates $(x : y : z)$ on $\PP$ and it coincides on $\PP \setminus \Sing({\FF})$ with the curve described by the inflection points of the leaves of $\FF$;
\item[(c)] If $C$ is an irreducible algebraic invariant curve of $\FF$ then
$C \subset I(\FF)$ if, and only if, $C$ is an invariant line;
\item[(d)] The degree of $I(\FF)$ is exactly $3d$.
\end{itemize}

\begin{remark} Let $\FF'$ be a saturated foliation on $\PP$ given by a homogeneous vector field $X'$ and let $P$ be a homogeneous polynomial.
It follows from the definition that the inflection divisor of the non saturated foliation $\FF$ given by the vector field $X=P \cdotp X'$ satisfies
$$
I(\FF)= I(\FF') + 3 C
$$
where 
$C$ is de divisor given by $P$.
\end{remark}

As mentioned above, the curvature of a web $\WW$ on a complex surface is a meromorphic 2-form with poles contained in the discriminant set $\Delta(W)$ of $\WW$. For dual of foliations, one has the following lemma.

\begin{lemma}\label{discriminant}
Let $\FF$ be a (not necessarily saturated) foliation on $\PP$. Then
$$\Delta (\Leg \FF) = \GG_{\FF}(I(\FF))\cup \check{\Sigma}(\FF)
$$
where $\check{\Sigma}(\FF)$ consists in the dual lines of the special singularities
$$\Sigma(\FF)=\{s \in \Sing(\FF')\,:\, \nu(\FF',s) \geq 2 \text{ or s is a radial singularity of $\FF'$} \}$$ of  the saturation $\FF'$ of $\FF$ and $\nu(\FF',s)$ stands for the algebraic multiplicity of $\FF'$ at~$s$, i.e. the maximum of the vanishing orders at $s'$ of the components of a local saturated vector field defining $\FF'$.
\end{lemma}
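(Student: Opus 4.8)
The plan is to compute $\Delta(\Leg\FF)$ directly from the implicit equation of the dual web and match its irreducible components against the two sets on the right-hand side. I would first reduce to the saturated case: by the Remark following properties (a)--(d), writing $X = P\cdot X'$ multiplies the inflection divisor by adding $3C$, and the Gauss map is defined to be that of $\FF'$, so $\GG_\FF(I(\FF)) = \GG_{\FF'}(I(\FF'))\cup\GG_{\FF'}(C)$; on the other hand $\Leg\FF$ and $\Leg\FF'$ differ only by the extra factor coming from $P$, whose contribution to the discriminant is exactly the dual of the tangent lines along $C$, i.e.\ $\GG_{\FF'}(C)$. One has to check these two bookkeeping computations agree, after which we may assume $\FF=\FF'$ is saturated and given by a homogeneous $1$-form $\omega = a\,dx+b\,dy+c\,dz$ of degree $d+1$ with $\Sing(\FF)$ of codimension $2$.

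Next I would use the affine description. Taking $F(x,y;p) = B(x,y) - pA(x,y)$ for $\FF$ in an affine chart, formula \eqref{F} gives $\check F(p,q;x) = B(x,px+q) - pA(x,px+q)$, a polynomial of degree $d$ in $x$ for generic $(p,q)$. A point $(p,q)$ lies in $\Delta(\Leg\FF)$ precisely when $\check F(p,q;\cdot)$ has a multiple root in $x$, i.e.\ when the $x$-resultant $\mathrm{Res}_x\!\left(\check F, \partial_x \check F\right)$ vanishes. The condition $\partial_x\check F(p,q;x)=0$ together with $\check F(p,q;x)=0$ says that the line $\ell=\{y=px+q\}$ is tangent to a leaf of $\FF$ at the point $(x,px+q)$ \emph{and} that this tangency is non-transverse to first order; geometrically this is exactly the condition that $\ell$ be an inflectional tangent of a leaf, i.e.\ $(x,px+q)\in I(\FF)$ and $\ell = T_{(x,px+q)}\FF$, which by property (b) is $\GG_\FF$ applied to a point of $I(\FF)$. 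Thus the closure of the locus of such $(p,q)$ coming from smooth points of $\FF$ is $\GG_\FF(I(\FF))$. The remaining components of the resultant locus are the ones where the bidegree drops because $\check F$ loses leading terms in $x$, or where the computation degenerates over $\Sing(\FF)$: these are the lines $\check s$ for $s\in\Sing(\FF)$, and I would show a line $\check s$ actually appears in $\Delta(\Leg\FF)$ iff $s$ is special, by a local analysis at $s$. Concretely, near a singularity $s$ with algebraic multiplicity $\nu = \nu(\FF,s)$, pulling back $\omega$ to a pencil of lines through $s$ shows the tangency order along $\check s$ jumps by $\nu - 1$; hence $\check s\subset\Delta(\Leg\FF)$ as soon as $\nu\ge 2$. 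The radial case $\nu=1$ needs separate attention: a radial singularity is one where $\omega$ vanishes to order $1$ but the linear part is a multiple of the radial vector field, so every line through $s$ is tangent to the leaf through its foot point along $\check s$ to higher order — again forcing $\check s$ into the discriminant — whereas a generic (non-radial) simple singularity contributes only a base point of the dual web, not a discriminant component.

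The main obstacle I anticipate is precisely this local study at the singularities: disentangling which simple singularities ($\nu=1$) do and do not produce a discriminant component, and proving that the radial ones (and only those, among $\nu=1$) do. This requires choosing good local coordinates at $s$, normalizing the linear (or leading) part of the vector field, parametrizing the nearby leaves and their tangent lines as a function of the slope $p$, and reading off the order of contact of $\Leg\FF$ with the line $\check s$. The transverse part $\GG_\FF(I(\FF))$ is comparatively clean once the resultant interpretation is in place; the subtlety is entirely in showing $\check\Sigma(\FF)$ is exactly the invariant part of the discriminant, with the correct (sharp) characterization of $\Sigma(\FF)$ by "$\nu\ge 2$ or radial". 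I would also need to double-check that no spurious components arise at infinity (the line at infinity of the chosen affine chart), which is handled by the intrinsic, coordinate-free nature of $I(\FF)$ asserted in property (b) together with $\mathrm{PGL}_3$-equivariance of the whole construction.
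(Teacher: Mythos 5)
Your proposal follows essentially the same route as the paper: reduce to the saturated case via the factorization $X=P\cdot X'$ and the remark $I(\FF)=I(\FF')+3C$, identify the transverse part of the discriminant with $\GG_{\FF}(I(\FF))$ through the implicit equation $\check F(p,q;x)$, and decide membership of $\check s$ in $\Delta(\Leg\FF)$ by counting how many tangency points of a generic line collapse at the singularity $s$. The one step you flag as the anticipated obstacle is exactly where the paper introduces its key device, the invariant $\tau(\FF,s)$ (the order of the first jet of $X$ at $s$ that is not a multiple of the radial field), which satisfies $\check F(p,0,x)=x^{\tau}\bigl[B_{\tau}(1,p)-pA_{\tau}(1,p)\bigr]+O(x^{\tau+1})$ and so uniformly encodes ``$\nu\ge 2$ or radial'' as ``$\tau\ge 2$'', replacing your separate treatment of the radial case and your slightly imprecise ``jump by $\nu-1$'' (the correct count is $\tau$, which exceeds $\nu$ precisely in the radial-type cases).
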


\begin{proof}
Let us suppose first that $\FF$ is a saturated foliation. We know  from \cite[\S3.2]{MarPer} that $\Delta(\Leg \FF)$ is formed by the image of the components of the inflection divisor by the Gauss map and the dual of some singularities.
Define $\tau(\mathcal F,s)$ as the infimum of the integers $k\ge\nu(\mathcal F,s)$ such that the $k$-jet at $s$ of a vector field $X$ defining $\mathcal F$ is not a multiple of the radial vector field centered at $s$. 
Take affine coordinates $(x,y)$ in $\PP$ such that $s=(0,0)$ and decompose the vector field $X=X_{\nu}+\cdots+X_{d+1}$ with $X_{i}=A_{i}\partial_{x}+B_{i}\partial$ and $A_{i},B_{i}$  homogeneous polynomials of degree $i$. Writing $\nu=\nu(\FF,s)$, $\tau=\tau(\FF,s)$ and $R=x\partial_{x}+y\partial_{y}$ we have $X_{i}=C_{i}R$ for $i=\nu,\ldots,\tau-1,d+1$ and $xB_{\tau}-yA_{\tau}\not\equiv 0$. Take $(p,q)$ affine coordinates in $\check{\PP}$ corresponding to the line $\{y=px+q\}\subset\PP$. The dual line $\check{s}$ of $s$ has then equation $q=0$. Evaluating the defining equation~(\ref{F}) of $\Leg\FF$ along $q=0$ we obtain that
\begin{equation}\label{F2}\check{F}(p,0,x)=x^{\tau}[B_{\tau}(1,p)-pA_{\tau}(1,p)]+\mathrm{higher\  order\ terms\ in\ }x.
\end{equation}
Let  $\ell\subset\mathbb{P}^{2}$ be a generic choice of a line passing through $s$, i.e. having dual coordinates $(p,0)$ with $p$ generic. Let $\ell'$ be a generic line of $\mathbb{P}^{2}$ close to $\ell$, i.e. having dual coordinates $(p',q')$ close to $(p,0)$. 
Formula~(\ref{F2}) shows that when $\ell'$ approaches to $\ell$ exactly $\tau(\mathcal F,s)$ of the $\deg\mathcal F$ tangency points between $\ell'$ and $\mathcal F$ collapse at $s\in\ell$. Since the dual lines of the tangency points define the directions of the dual web $\mathrm{Leg}\mathcal F$, now it is clear that the dual line $\check{s}$ of $s$ is contained in the discriminant of $\mathrm{Leg}\mathcal F$ if $\tau(\mathcal F,s)\ge 2$. This occurs when $\nu(\mathcal F,s)\ge 2$ or $s$ is a radial singularity of $\mathcal F$.
On the other hand, if $s$ is a non radial singularity of multiplicity one and $\check{s}\subset\Delta(\mathrm{Leg}\mathcal F)$ then for each line $\ell$ passing through $s$ there is a multiple tangency point between $\ell$ and $\mathcal F$ which splits in simple tangency points between  $\mathcal F$ and any generic line $\ell'$ close to $\ell$.  Since $\mathrm{Sing}\mathcal F$ is finite the only possibility for this collapse of tangencies is that all the tangent lines of some component of $I(\mathcal F)$ pass through the singular point $s$, i.e. $\check{s}$ being the image by the Gauss map of a component of $I(\mathcal F)$.
%

Consider now a non saturated foliation $\FF$ defined by the vector field $X= P\cdotp X'$ such that the foliation $\FF'$ given by $X'$ is saturated and denote by $D=\{P=0\}$. It is easily seen that $\GG_{\FF'}(D) = \Tang(\Leg D, \Leg \FF')$, so we have
$$\Delta(\Leg \FF)= \Delta(\Leg\FF')\cup \Tang(\Leg D, \Leg \FF')= \Delta(\Leg\FF') \cup \GG_{\FF'}(D).
$$
Now we just use the last remark to conclude our claim.
\end{proof}

Our main objective is to understand the degree $3$ foliations $\FF$ whose Legendre transform $\Leg\FF$ is a flat $3$-web. As we have pointed out in section~\ref{1.2}, flatness is equivalent to the holomorphy of the curvature at the generic point of each irreducible component of the discriminant. The main idea we want to exploit is that the important part of $\Delta(\Leg\FF)=\GG_{\FF}(I(\FF))\cup\check{\Sigma}(\FF)$
comes from the the image of the inflection curve under the Gauss map. In fact, we shall prove that the role of the dual lines of the special singularities is secondary in the sense that if the curvature is holomorphic along $\mathcal{G_{F}}(I(\mathcal F))$ then it is also holomorphic along $\check{\Sigma}(\FF)$ and consequently $\Leg\FF$ is flat.

We begin by studying the curvature along the dual line of a singular point of a foliation $\FF$ of degree $3$ in some particular situations.  There is no loss of generality in assuming that in affine coordinates the origin $O=(0,0)$ is a singularity of $\FF$. In this coordinates we can write a vector field defining $\FF$ as $X=X_1 + X_2 + X_3 + H \cdotp R$, where $X_i$ denotes a homogeneous vector field of degree $i$, $H$ is a homogeneous polynomial of degree $3$ and $R$ is the radial vector field. 

\begin{proposition}\label{holomorfia_singularidad_menor_2}
If $\nu(\FF, O) \leq 2$ then the curvature $K(\Leg \FF)$ is holomorphic at the generic point of $\check{O}$.
\end{proposition}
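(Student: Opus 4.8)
The plan is to use the Legendre transform in the affine coordinates introduced before the statement, where $O=(0,0)$ is the singularity and $X=X_1+X_2+X_3+H\cdot R$. The dual line $\check O$ has equation $q=0$ in the chart $(p,q)$ attached to the line $\{y=px+q\}$, so I want to compute $K(\Leg\FF)$ near a generic point $(p_0,0)$ of $\check O$ and show it has no pole there. The first step is to write down the implicit equation $\check F(p,q;x)=B(x,px+q)-pA(x,px+q)=0$ of $\Leg\FF$ from formula~(\ref{F}), substitute the decomposition $X=X_1+X_2+X_3+H\cdot R$, and read off from~(\ref{F2}) the order of contact of $\Leg\FF$ with $\check O$: since $\nu(\FF,O)\le 2$ we have $\tau(\FF,O)\le 3$, and in fact the relevant feature is that only $\tau$ of the three roots (in $x$) of $\check F(p,q;x)=0$ collapse on $\check O$, while the remaining $3-\tau$ stay away. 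This lets me split $\Leg\FF$ locally near $(p_0,0)$ as a superposition $\WW_\tau\boxtimes\WW_{3-\tau}$ of a $\tau$-web whose discriminant contains $\check O$ and a $(3-\tau)$-web transverse to $\check O$.

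The main case is $\tau=2$ (the cases $\tau=0,1$ being immediate since then $\check O\not\subset\Delta(\Leg\FF)$ and there is nothing to prove, or the web is a foliation near $\check O$). So near $(p_0,0)$ write $\Leg\FF=\WW_2\boxtimes\FF_1$ where $\FF_1$ is a (smooth) foliation transverse to $\check O$ and $\WW_2$ is a $2$-web with $\Delta(\WW_2)$ a smooth curve containing $\check O$ (generically $\Delta(\WW_2)=\check O$ itself, after possibly shrinking the neighborhood so that the Gauss-image components of $I(\FF)$ through $p_0$ are avoided for generic $p_0$). The key computation is then to identify $\Delta(\WW_2)$ and check whether it is invariant by $\WW_2$ or by $\FF_1$: by Theorem~\ref{baricentro}, holomorphy of $K(\WW_2\boxtimes\FF_1)=K(\Leg\FF)$ along $\check O$ is equivalent to $\Delta(\WW_2)$ being invariant by one of the two factors. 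From the explicit form~(\ref{F2}), $\check F(p,q;x)=x^{2}\,[B_2(1,p)-pA_2(1,p)]+O(x^3)$ when $\tau=2$, i.e. when $X_1$ and (if $\nu=1$) also the degree-one part are radial — precisely the situation forced by $\nu(\FF,O)\le 2$ after accounting for the radial possibility. Expanding the two colliding roots $x_\pm(p,q)$ of $\check F=0$ as Puiseux series in $q$ and computing the barycenter $\frac{1}{2}(x_++x_-)$, I expect to find that the double root lies to first order along $\{q=0\}$, so that $\WW_2$ is tangent to $\check O$ to high enough order that $\check O$ is $\WW_2$-invariant (equivalently, the $\FF$-barycenter of the colliding pair, pushed to $\check\Pp^2$, leaves $\check O$ invariant). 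That invariance, via Theorem~\ref{baricentro}, yields the holomorphy of the curvature at the generic point of $\check O$.

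I expect the main obstacle to be the bookkeeping in the Puiseux expansion of the two colliding roots $x_\pm(p,q)$ and the verification that their barycenter (the tangent direction of $\beta_{\WW_2}(\Leg\FF)=\WW_2$-barycenter) is actually tangent to $\{q=0\}$; a careless expansion will produce spurious first-order terms in $q$. The clean way to organize this is to change the dependent variable so that $\check F(p,q;x)=(x-x_+(p,q))(x-x_-(p,q))\cdot u(p,q;x)$ with $u(p_0,0;0)\ne 0$, expand $x_+ + x_-$ and $x_+ x_-$ as holomorphic functions of $(p,q)$ (note the \emph{sum} and \emph{product} of the colliding pair \emph{are} holomorphic, even though each root individually is only Puiseux), and then check the invariance condition $\frac{\partial}{\partial q}\big(x_++x_-\big)\big|_{q=0}\equiv 0$ as functions of $p$, or the appropriate reformulation in terms of the implicit equation of $\WW_2$. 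A secondary technical point is to justify that for generic $p_0\in\check O$ the discriminant of $\Leg\FF$ near $(p_0,0)$ is exactly $\check O$ (smooth, possibly non-reduced) with the third sheet $\FF_1$ genuinely transverse, so that Theorem~\ref{baricentro} applies verbatim; this follows because the components of $\GG_\FF(I(\FF))$ meet $\check O$ in finitely many points and $\Sigma(\FF)$ is finite, so a generic point of $\check O$ lies on no other component of the discriminant.
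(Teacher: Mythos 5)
Your treatment of the central case $\tau(\FF,O)=2$ is essentially the paper's: split off the branch that stays away from $O$, observe that the $2$-web $\WW_2$ formed by the two colliding tangency points has $\Delta(\WW_2)=\check{O}$ and leaves $\check{O}$ invariant, and apply Theorem~\ref{baricentro}. (The invariance you propose to verify by Puiseux expansion is in fact immediate: by~(\ref{F2}) the two colliding roots $x_\pm(p,q)$ vanish identically on $q=0$, so $\{q=0\}$ is an integral curve of $dq/dp=-x_\pm$; no barycenter computation is needed. The paper treats the radial sub-case $X_1=cR\neq 0$ separately via Propositions~3.3 and~2.6 of \cite{MarPer}.) However, your case analysis has two genuine gaps. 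The smaller one: $\tau=1$ does \emph{not} imply $\check{O}\not\subset\Delta(\Leg\FF)$. The line $\check{O}$ can be a discriminant component because the two tangency points $P_\ell, Q_\ell$ \emph{away} from $O$ collide for every line $\ell$ through $O$, i.e.\ $\check{O}=\GG_{\FF}(C)$ for a component $C$ of $I(\FF)$ all of whose tangents pass through $O$. The paper's first case handles exactly this, writing $\Leg\FF=\WW_1\boxtimes\WW_2$ with $\WW_1$ the foliation coming from the tangency at $O$ (hence leaving $\check{O}$ invariant) and $\WW_2$ the transverse $2$-web with $\Delta(\WW_2)=\check{O}$, and then invoking Theorem~\ref{baricentro}. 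This is repairable with your tools, but it is not ``nothing to prove''.

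The serious gap is the case $\nu(\FF,O)\le 2$ with $\tau(\FF,O)\ge 3$, i.e.\ $X_1=0$ and $X_2=(\alpha x+\beta y)\cdot R$. Then $\Tang(\FF,\ell)=3O$ for a generic line $\ell$ through $O$, so \emph{all three} branches of $\Leg\FF$ collide along $\check{O}$. One can still split off a single-valued branch (the web has the form $c_3\,dq^3+qc_2\,dq^2dp+qc_1\,dq\,dp^2+q^2c_0\,dp^3$, whose Newton polygon yields one root $dq/dp=O(q)$ and two roots of order $q^{1/2}$), but that residual foliation is itself tangent to $\check{O}$, so $\check{O}$ also lies in the tangency locus of the two factors and the hypothesis $\Delta(\WW_2)=\Delta(\WW)$ of Theorem~\ref{baricentro} fails: the barycenter criterion is unavailable. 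This is precisely where the paper has to work. It computes $\Leg\FF$ explicitly in the form above, disposes of the sub-case $q\mid c_3$ (then $X=P_3\cdot R$, the web is algebraic and flat by the Mihaileanu criterion), and otherwise pulls back by the double cover $(p,s)\mapsto(p,s^2)$, checks that the pulled-back web is regular along $s=0$, and deduces that the pole order $\delta$ of $K(\Leg\FF)$ along $q=0$ satisfies $\delta\le 1/2$, hence $\delta\le 0$. This ramified-cover bound on the pole order is a different idea from anything in your proposal and cannot be replaced by the decomposition argument.
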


\begin{proof}
With the previous notation, let us assume first that $X_1 \neq 0$ and $\check{O} \subseteq \Delta(\Leg \FF)$. If $O$ is not a radial singularity, then the tangency order between $\FF$ and a generic line through $O$ is one. Formula~(\ref{F2}) implies that for each generic line $\ell$ passing through $O$ we have $\mathrm{Tang}(\FF,\ell)=O+P_{\ell}+Q_{\ell}$ with $P_{\ell}\neq O\neq Q_{\ell}$ and allow us to write, around a generic point of $\check{O}$, $\Leg(\FF)= \WW_1 \boxtimes \WW_2$ where $\WW_1$ is a foliation tangent to $\check{O}$ and $\WW_2$ is a $2$-web transversal to $\check{O}$ with $\Delta(\WW_2)=\check{O}$. Then we can apply Theorem~\ref{baricentro} to conclude the proof. In the case of a radial singularity we just use first Proposition~3.3 then Proposition 2.6 of \cite{MarPer} to get the holomorphy of $K(\Leg \FF)$ along $\check{O}$.

Suppose now $X_1=0$ and $X_2 \neq 0$. If $X_2$ is not parallel to the radial vector field $R$ then Formula~(\ref{F2}) implies that for each generic line $\ell$ passing through $O$ we have $\mathrm{Tang}(\FF,\ell)=2O+P_{\ell}$ with $P_{\ell}\neq O$.
Thus, around a generic point of $\check{O}$  one can write $\Leg \FF = \WW_1 \boxtimes \WW_2$, where $\WW_1$ is a foliation transverse to $\check{O}$ and $\WW_2$ is a $2$-web having $\check{O}$ as a totally invariant discriminant. Again, we can use Theorem~\ref{baricentro} to conclude that $K(\Leg \FF)$ is holomorphic at the generic point of $\check{O}$. Finally we consider the case $X_2=(\alpha x +\beta y)\cdotp R$ for some complex numbers $\alpha$ and $\beta$. Furthermore, we write
\begin{eqnarray*}
X_3 &= &(\sum_{i+j=3}a_{ij}x^i y^j)\frac{\partial}{\partial x} + (\sum_{i+j=3}b_{ij}x^i y^j)\frac{\partial}{\partial y}\\
H &= & h_0 x^3 + h_1 x^2 y + h_2 x y^2 + h_3 y^3.
\end{eqnarray*}
Then in the affine coordinates $(p,q)$ of $\Pd$ the web $\Leg \FF$ is given by the symmetric form $$\omega = c_3 dq^3 + q c_2 dq^2 \cdotp dp + q c_1 dq \cdotp dp^2 + q^2 c_0 dp^3,$$
where 
\begin{eqnarray*}
c_3 & = & -a_{0,3}p^4 + h_3 p^3 q + (b_{0,3}-a_{1,2})p^3 + h_2 p^2 q + (b_{1,2}- a_{2,1})p^2 + h_1 pq +\\ &&+(b_{2,1}-a_{3,0})p + h_0 q + b_{3,0}\\
c_2 & = & 3a_{0,3}p^3 - 3 h_3 p^2 q + (2 a_{1,2}- 3 b_{0,3})p^2 - 2h_2 pq + (a_{2,1} - 2b_{1,2})p - h_1 q - b_{2,1}\\
c_1 & = & -3 a_{0,3}p^2 q + 3h_3 pq^2 + (3b_{0,3}- a_{1,2})pq + h_2 q^2 + b_{1,2}q + \beta p + \alpha\\
c_0 & = & a_{0,3}pq - b_{0,3}q - h_3 q^2 - \beta.
\end{eqnarray*}
If $q | c_3(p,q)$ we deduce that $X= P_3 \cdotp R$ for some degree $3$ polynomial $P_3$. In this case, the $3$-web $\Leg\FF$ is algebraic hence $K(\Leg \FF) \equiv 0$ according to Mihaileanu criterion. Then we shall assume $q\nmid c_3(p,q)$ and consider the branched covering $\pi (p,s)=(p, s^2)=(p,q)$. It is a straightforward computation to see that the web $\pi^{*}(\Leg \FF)$ is regular at the generic point of $\{s=0\}$. We  write locally $K(\Leg \FF) = \frac{f(p,q)}{q^{\delta}} dp \wedge dq$ around a generic point of $\check{O}=\{q=0\}$, with $\delta\in\mathbb Z$ and $f(p,q)$ a holomorphic germ satisfying $q \nmid f(p,q)$.
Noting that
$$
K(\pi^{*}(\Leg \FF))= \frac{2f(p,s^2)}{s^{2\delta -1}}dp \wedge ds.
$$
we obtain $\delta \leq 1/2$  from what we conclude that $\frac{f(p,q)}{q^{\delta}}$ hence $K(\Leg \FF)$ is holomorphic at the considered (generic) point of
 $\check{O}$.
\end{proof}

As a consequence, in order to study the flatness of $\Leg \FF$ we just need to consider singularities of the form $X= X_3 + H\cdotp R$.

\begin{remark}
Let $\FF$ be the foliation given by a vector field $X= A \Dx + B \Dy$ around a singularity $p$. Then $\mu(\FF,p) \geq \nu(A,p)\cdotp \nu(B,p)$, where $\mu$ stands for the Milnor number and $\nu$ the algebraic multiplicity (see for instance \cite[\S3.5]{Fish}).
\end{remark}

\begin{lemma}\label{solo_una_singularidad_3}
Let $\FF$ be a foliation of degree $3$ on $\PP$. If $p \in \Sing(\FF)$ is such that $\nu(\FF,p) \geq 3$ then for any $s \in \Sing(\FF)\setminus\{p\}$ one has $\nu(\FF,s)\leq 2$.
\end{lemma}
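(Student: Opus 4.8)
The plan is to bound the algebraic multiplicities of the singular set of a degree $3$ foliation by a global count. Recall that for a foliation $\FF$ of degree $d$ on $\PP$ one has the Poincaré–Hopf type identity $\sum_{s\in\Sing(\FF)}\mu(\FF,s)=d^2+d+1$, which for $d=3$ gives $\sum_s \mu(\FF,s)=13$. The idea is that a singularity $p$ with $\nu(\FF,p)\ge 3$ must already absorb most of this total, leaving too little room for a second singularity of high multiplicity.

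First I would make the lower bound on the Milnor number at $p$ explicit. Choosing affine coordinates with $p$ at the origin and writing the defining vector field as $X=A\,\partial_x+B\,\partial_y$ with $\min(\nu(A,p),\nu(B,p))=\nu(\FF,p)\ge 3$, the remark just before the statement gives $\mu(\FF,p)\ge\nu(A,p)\cdot\nu(B,p)\ge 3\cdot 3=9$. Here one must be slightly careful: after saturating, a factor $P$ could have been removed; but since $\FF$ has degree $3$ its defining vector field in homogeneous coordinates has coefficients of degree $4$, so in affine coordinates $A,B$ have degree at most $4$, and $\nu(A,p),\nu(B,p)\le 4$. If $\nu(\FF,p)\ge 4$ then in fact $\mu(\FF,p)\ge 16>13$, which is impossible; hence necessarily $\nu(\FF,p)=3$, and moreover both $A$ and $B$ vanish to order exactly $3$ at $p$, with $\mu(\FF,p)\ge 9$.

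Next, suppose for contradiction there is $s\in\Sing(\FF)\setminus\{p\}$ with $\nu(\FF,s)\ge 3$. By the same reasoning $\mu(\FF,s)\ge 9$. Then $\mu(\FF,p)+\mu(\FF,s)\ge 18>13$, contradicting the global identity $\sum_{s'}\mu(\FF,s')=13$ (Milnor numbers being nonnegative). Therefore every $s\neq p$ satisfies $\nu(\FF,s)\le 2$. I should also address the non-saturated case mentioned parenthetically throughout the paper: if $\FF$ is not saturated, say $X=P\cdot X'$ with $\FF'$ saturated, then one argues with $\FF'$ after noting how $\nu$ changes — but since the statement is naturally about the foliation itself and high multiplicity is only strengthened by adding the factor $P$, the cleanest route is to reduce to the saturated case, where the Milnor-number count is available, or alternatively to invoke directly that a degree $3$ foliation has at most one singularity of multiplicity $\ge 3$.

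The main obstacle is not the inequality $\mu\ge 9$ itself but making sure the bookkeeping is airtight: confirming that $\nu(\FF,p)\le 4$ for a degree $3$ foliation (so the coefficients really do have degree $\le 4$ and the multiplicity cannot run away), and handling the saturation subtlety so that the Milnor-number total $d^2+d+1=13$ is legitimately applied. Once those points are pinned down, the conclusion is immediate from $9+9>13$.
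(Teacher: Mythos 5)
Your argument for the saturated case is exactly the paper's: the remark $\mu(\FF,p)\ge\nu(A,p)\,\nu(B,p)$ combined with Darboux's count $\sum_{s}\mu(\FF,s)=3^2+3+1=13$ rules out two singularities of algebraic multiplicity $\ge 3$, since each would contribute at least $9$ to the total. That part is correct (the detour through ``$\nu(\FF,p)\le 4$'' is harmless but not needed, as $9+9>13$ already closes the case).

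The genuine gap is the non-saturated case, which you defer to ``a reduction to the saturated case'' or to ``invoking directly'' essentially the statement being proved. Neither works as stated. If $X=P\cdot X'$ with $\FF'$ saturated and $D=\{P=0\}$, then $\nu(\FF,s)=\nu(\FF',s)+\nu(D,s)$, and what the lemma requires at points $s\ne p$ is an \emph{upper} bound on $\nu(\FF,s)$; the factor $P$ can \emph{raise} the multiplicity at those points too, since every point of $D$ is singular for $\FF$ and $\nu(D,s)$ can be $2$ or $3$ at crossings of $D$. The Milnor-number count now applies only to $\FF'$, which has degree $3-\deg D<3$, and it says nothing whatsoever about $\nu(D,s)$ (the Milnor number of $\FF$ itself is not even finite along $D$). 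So one must control the curve $D$ by a separate argument: the paper runs a case analysis over $\deg D\in\{1,2,3\}$, combining Darboux for the lower-degree saturated foliation $\FF'$ with the elementary observation that a curve of degree $\le 3$ having a point of multiplicity $\ge 2$ at $p$ (which is forced by $\nu(\FF,p)\ge 3$) is a union of lines sufficiently concurrent at $p$ that $\nu(D,s)$ stays small at every other point. Your proposal contains no substitute for this step, and the heuristic ``high multiplicity is only strengthened by adding the factor $P$'' cuts the wrong way: it helps at $p$, where a lower bound is wanted, but works against you at $s$, where an upper bound is wanted.
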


\begin{proof}
If $\FF$ is a saturated foliation, the lemma is a consequence of the last remark and the Darboux theorem $\sum\limits_{s\in \Sing(\FF)} \mu(\FF,s)=3^2 + 3 + 1 = 13$ (see \cite{Br}).

Let us suppose that $\FF$ is non saturated and given by $X= P\cdotp X'$ such that the foliation $\FF'$ given by $X'$ is saturated and denote by $D=\{P=0\}$. We consider the following possibilities.
\begin{enumerate}[(i)]
\item If $\deg(D)=1$, then $\nu(\FF',p) \geq 2$ and by Darboux theorem we have, for any other singularity $s$ of $\FF'$, $\mu(\FF',s)\leq 3$. Therefore $\nu(\FF',s)\leq 1$ and so $\nu(\FF,s) = \nu(\FF',s) + \nu(D,s) \leq 2$.

\item If $\deg(D)=2$, then by Darboux theorem $\nu(\FF',s)
\leq 1$ for all $s \in \Sing (\FF')$. Since $3 \leq \nu(\FF,p)$ we infer $\nu(D,p)\geq 2$ and this implies that $D$ represents two different lines through $p$, therefore $\nu(D,s) \leq 1$ for any other singularity of $\FF'$.

\item If $\deg(D)=3$, assume first $p \in \Sing(\FF')$. Then $\nu(D,p)\geq 2$, therefore $\nu(D,s)\leq 2$ for any $s \neq p$, otherwise we would have that $D$ is the union of three different lines through $s$ and $\nu(D,p)\leq 1$. If $p \notin \Sing(\FF')$ then $\nu(D,p)=3$ and $D$ represents the union of three different lines through $p$. It remains to observe that $\nu(D,s)\leq 1$ for any $s \neq p$. 
\end{enumerate} 
\end{proof}

\section{The homogeneous case}
In this section we study the dual web of a degree $3$ homogeneous foliation on $\PP$. We recall that a homogeneous foliation of degree $d$ is, by definition, induced by a vector field $X_{d}$ with degree $d$ homogeneous polynomial coefficients in some affine coordinates. It can be checked that
 a degree $d$ foliation is homogeneous if and only if it has a singularity with algebraic multiplicity exactly $d$. Let us begin with a general result.
\begin{lemma}\label{holomorfia_homogeneo}
Let $\FF$ be a homogeneous foliation of degree $d$ on $\PP$ such that $K(\Leg \FF)$ is holomorphic on $\Pd \setminus \check{O}$. Then $K(\Leg \FF)\equiv 0$.
\end{lemma}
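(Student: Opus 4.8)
The plan is to exploit the very special structure a homogeneous foliation enjoys under the Legendre transform, namely that \emph{all} the geometry is concentrated at the single point $O$ and its dual line $\check O$. First I would recall that for a homogeneous foliation $\FF$ of degree $d$ the point $O$ is a singularity with $\nu(\FF,O)=d$, and the line at infinity is invariant; consequently the only singularities of $\FF$ other than $O$ lie on the line at infinity, and by homogeneity the leaves of $\FF$ are organized by the lines through $O$ (on each such line the restriction is governed by the ``tangent cone'' $xB_d-yA_d$). Passing to $\Pd$, the dual line $\check O$ is (by Lemma~\ref{discriminant}) a component of $\Delta(\Leg\FF)$, and the dual lines of the singularities at infinity all pass through the dual point $\check\ell_\infty$ of the line at infinity; more importantly $\GG_\FF(I(\FF))$, by properties (b)--(d) of the inflection divisor together with homogeneity, is governed by $\check O$ and a bounded number of lines through $\check O$ coming from the inflection of the homogeneous leaves. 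So the discriminant of $\Leg\FF$, away from $\check O$, consists only of lines through $\check O$ (images of the tangent-cone factors and of the inflectional lines of the homogeneous pencil of leaves).

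The key step is then: if $K(\Leg\FF)$ is holomorphic on $\Pd\setminus\check O$ — which is the hypothesis — then in fact it has no poles on $\check O$ either, hence is a global holomorphic $2$-form on $\Pd$ and therefore vanishes identically since $H^0(\Pd,\Omega^2)=0$. To get the holomorphy along $\check O$ I would use the self-similar structure: a homogeneous foliation admits the action $(x,y)\mapsto(tx,ty)$ which on the dual side $(p,q)$ corresponds to $(p,tq)$ (since $q$ measures the affine term of the line $y=px+q$), and $\Leg\FF$ is invariant under this $\CC^*$-action with $\check O=\{q=0\}$ being the fixed locus. Writing $K(\Leg\FF)=\frac{g(p,q)}{q^N}\,dp\wedge dq$ near a generic point of $\check O$, the covariance of the curvature under the flow of the (quasi-homogeneous) symmetry forces strong constraints on $g$ and $N$; combined with the explicit description of $\Leg\FF$ as an implicit equation $\check F(p,q;x)=0$ polynomial in $x$ whose $q$-valuation at $x=0$ is controlled (cf. formula~(\ref{F2}) with $s=O$, $\tau=\nu=d$), one computes that $N\le 0$. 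The cleanest way to see this is the branched-covering trick already used in the proof of Proposition~\ref{holomorfia_singularidad_menor_2}: pull back by $\pi(p,s)=(p,s^m)$ for a suitable $m$ so that $\pi^*\Leg\FF$ becomes regular (or has reduced, non-invariant discriminant) along $\{s=0\}$, apply $K(\pi^*\Leg\FF)=\pi^*K(\Leg\FF)$, and read off that the pole order on $\{q=0\}$ must be non-positive.

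I expect the main obstacle to be precisely the \emph{local} analysis along a generic point of $\check O$: one must show that after the appropriate ramified base change the pulled-back web is regular (or at least that its discriminant along $\{s=0\}$ is invariant), so that Theorem~\ref{baricentro} or the direct valuation argument applies. This requires understanding the local normal form of $\Leg\FF$ near a generic point of $\check O$ for a \emph{general} homogeneous $X_d$, i.e. controlling the multiplicity with which $q$ divides each coefficient $c_j(p,q)$ of the symmetric $1$-form defining $\Leg\FF$ (as in the degree $3$ computation displayed in Proposition~\ref{holomorfia_singularidad_menor_2}), and checking that the branching exponent $m=d$ suffices. Once the correct $m$ is pinned down the rest is the formal computation $K(\pi^*\WW)=\pi^*K(\WW)$ plus the observation that a meromorphic $2$-form on $\Pd$ with no poles is zero; so the real content is the claim that a generic homogeneous $\Leg\FF$ becomes regular along $\{s=0\}$ after a degree-$d$ ramification transverse to $\check O$, and handling the degenerate sub-cases (such as $X_d$ a multiple of the radial field, where $\Leg\FF$ is algebraic and one invokes the Mihăileanu criterion directly) separately.
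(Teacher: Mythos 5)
There is a genuine gap, and it sits exactly where you place the ``real content'' of your argument. Your plan is to prove that $K(\Leg\FF)$ is holomorphic along $\check{O}$ by a \emph{local} analysis at a generic point of $\check{O}$ (normal form of $\Leg\FF$ plus a ramified base change $q=s^m$). This cannot work, because holomorphy of $K(\Leg\FF_h)$ along $\check{O}$ is \emph{not} automatic for a homogeneous foliation: near a generic point of $\check{O}=\{q=0\}$ the $3$-web decomposes as $\prod_i(dq+q\,h_i(p,q)\,dp)=0$, i.e.\ into three foliations all leaving $\check{O}$ invariant and pairwise tangent along it, and Lemma~\ref{eta} (with $a=1$) shows that holomorphy there is a nontrivial closed condition on the $h_i$ --- this is precisely why Proposition~\ref{homogeneo_generico} singles out only special homogeneous foliations. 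For the same reason no ramified cover regularizes the web: pulling back $dq+qh_i\,dp$ by $q=s^2$ gives $s(2\,ds+sh_i\,dp)$, so $\{s=0\}$ remains in the discriminant, and if $K=\frac{c(p)}{q}dp\wedge dq$ then $K(\pi^*\Leg\FF)=\frac{2c(p)}{s}dp\wedge ds$ still has a pole. Any correct proof must therefore use the \emph{global} hypothesis that $K$ is holomorphic on all of $\Pd\setminus\check{O}$, which your local steps never invoke.

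The fix is to push the $\mathbb{C}^*$-symmetry you already identified globally instead of locally, which is what the paper does. Work in the affine chart $(a,b)$ of $\Pd$ dual to lines $\{ax+by=1\}$, so that $\check{O}$ is the line at infinity; the hypothesis then says $K(\Leg\FF)=f(a,b)\,da\wedge db$ with $f$ a polynomial (a rational $2$-form with poles only at infinity). Invariance of $\FF$ under $h_\lambda(x,y)=(\lambda x,\lambda y)$ gives $\check{h}_\lambda^*K=K$ with $\check{h}_\lambda(a,b)=(a/\lambda,b/\lambda)$, i.e.\ $f(a/\lambda,b/\lambda)=\lambda^2f(a,b)$; comparing homogeneous components of degree $m$ forces $\lambda^{-m}f_m=\lambda^2f_m$, hence $f\equiv 0$. (Equivalently, in your $(p,q)$ chart the invariance under $(p,q)\mapsto(p,tq)$ forces $K=\frac{c(p)}{q}\,dp\wedge dq$, and holomorphy on $\Pd\setminus\check O$ --- in particular along the part of the line at infinity of that chart not lying on $\check O$ --- kills $c$.) No analysis along $\check{O}$, and no branched covering, is needed.
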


\begin{proof}
Fix coordinates $(a,b)$ in $\Pd$ associated to the line $\{ax+by=1\}$ in $\PP$. Since $\FF$ is invariant by the homoteties $h_{\lambda}(x,y)=(\lambda x,\lambda y)$ we can assert that $\Leg \FF$ is invariant by the dual maps $\check{h}_{\lambda}(a,b)=(\frac{a}{\lambda},\frac{b}{\lambda})$ and so
$$
\check{h}_{\lambda}^{*}(K(\Leg \FF))=K(\Leg \FF).
$$
Combining our hypothesis and the fact that $\check{O}$ is the line at infinite in coordinates $(a,b)$ yields $K(\Leg \FF)= f(a,b) da \wedge db$ where $f$ is a polynomial. By the previous discussion one has $\lambda^2 f(a,b)= f(\frac{a}{\lambda}, \frac{b}{\lambda})$, the assertion follows.
\end{proof}

Now we state the main result of this section.

\begin{proposition}\label{homogeneo_flat}
Let $\FF$ be a homogeneous foliation of  degree $3$ on $\PP$. Suppose that $K(\Leg \FF)$ is holomorphic at the generic point of $\GG_{\FF}(I(\FF))$, then $K(\Leg \FF) \equiv 0$.
\end{proposition}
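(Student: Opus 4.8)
The plan is to deduce the statement from the results already established, chiefly Lemma~\ref{holomorfia_homogeneo} and Proposition~\ref{holomorfia_singularidad_menor_2}. Since $\FF$ is homogeneous of degree $3$, the characterization recalled at the beginning of this section provides a singular point of $\FF$ which, after an affine change of coordinates, we may take to be $O=(0,0)$, with $\nu(\FF,O)=3$. By Lemma~\ref{holomorfia_homogeneo} it suffices to prove that $K(\Leg\FF)$ is holomorphic on $\Pd\setminus\check{O}$. As $K(\Leg\FF)$ is a meromorphic $2$-form whose polar set is contained in $\Delta(\Leg\FF)$, this reduces to showing that $K(\Leg\FF)$ is holomorphic at the generic point of every irreducible component $C$ of $\Delta(\Leg\FF)$ distinct from $\check{O}$.

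The next step is to invoke Lemma~\ref{discriminant}, which gives $\Delta(\Leg\FF)=\GG_{\FF}(I(\FF))\cup\check{\Sigma}(\FF)$. If $C$ is contained in $\GG_{\FF}(I(\FF))$, the holomorphy of $K(\Leg\FF)$ at its generic point is precisely the hypothesis. Otherwise $C$ is the dual line $\check{s}$ of some special singularity $s\in\Sigma(\FF)\subseteq\Sing(\FF)$, and since we are allowed to discard $\check{O}$ we are left with the case $s\neq O$. This is the step where homogeneity is really used: because $\nu(\FF,O)=3$, Lemma~\ref{solo_una_singularidad_3} forces $\nu(\FF,s)\leq 2$, so, bringing $s$ to the origin by an affine change of coordinates, Proposition~\ref{holomorfia_singularidad_menor_2} applies and yields that $K(\Leg\FF)$ is holomorphic at the generic point of $\check{s}=C$. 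Gathering the two cases, $K(\Leg\FF)$ is holomorphic on $\Pd\setminus\check{O}$, and a second application of Lemma~\ref{holomorfia_homogeneo} gives $K(\Leg\FF)\equiv 0$.

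I do not anticipate any serious difficulty: the argument is essentially a bookkeeping of Lemmas~\ref{holomorfia_homogeneo}, \ref{discriminant} and~\ref{solo_una_singularidad_3} together with Proposition~\ref{holomorfia_singularidad_menor_2}, and it illustrates concretely the guiding principle that only the image $\GG_{\FF}(I(\FF))$ of the inflection divisor matters for flatness. The only mild point requiring attention is that Proposition~\ref{holomorfia_singularidad_menor_2} is applied at a singularity $s\neq O$, where $\FF$ is no longer homogeneous in coordinates centered at $s$; but that proposition makes no use of homogeneity, so this is harmless. (One should also note that the passage $\Sigma(\FF)\subseteq\Sing(\FF)$ and the identity $\nu(\FF,O)=3$ hold whether or not $\FF$ is saturated, which is what lets us run the argument uniformly.)
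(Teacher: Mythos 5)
Your proof is correct and follows essentially the same route as the paper's: the paper's own (much terser) argument likewise combines Lemma~\ref{solo_una_singularidad_3} and Proposition~\ref{holomorfia_singularidad_menor_2} with the hypothesis to get holomorphy of $K(\Leg\FF)$ on $\Pd\setminus\check{O}$, and then concludes with Lemma~\ref{holomorfia_homogeneo}. Your version merely makes explicit the intermediate use of Lemma~\ref{discriminant} to enumerate the components of $\Delta(\Leg\FF)$, which is a welcome clarification but not a different argument.
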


\begin{proof}
Since $\nu(\FF,O)=3$,  Lemma~\ref{solo_una_singularidad_3} and Proposition~\ref{holomorfia_singularidad_menor_2} yields $K(\Leg \FF)$ to be holomorphic over $\PP \setminus \check{O}$. We need only to apply lemma \ref{holomorfia_homogeneo}. 
\end{proof}

\subsection{The generic case}

Consider now homogeneous foliations of degree $3$ such that $I(\FF)$ is reduced and suppose that $K(\Leg \FF)$ is holomorphic at the generic point of $\GG_{\FF}(I(\FF))$. Since $\FF$ is given by a homogeneous vector field $X=A \Dx + B \Dy$ then $I(\FF)$ is formed by the  line at infinity $\ell_{\infty}$ and other eight lines through $O=(0,0)$. The tangent cone of $\FF$ is the divisor $$\Tang(\FF,R)=\{yA-xB=0\}$$ and it consists of the invariant lines of $\FF$ different from $\ell_{\infty}$.

\begin{lemma}
Under the above assumptions, the tangent cone of $\FF$ is reduced.
\end{lemma}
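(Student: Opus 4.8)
The plan is to establish a purely algebraic divisibility and let reducedness of $I(\FF)$ do the rest. Write $T:=yA-xB$ for the degree-$4$ form cutting out the tangent cone; I will show that $T$ divides the degree-$8$ factor of the inflection determinant, so that, if $I(\FF)$ is reduced, $T$ must be square-free.

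First I would compute the inflection determinant (\ref{inflection}) for a homogeneous foliation. Since $I(\FF)$ is independent of the homogeneous representative, take $X=A\,\Dx+B\,\Dy$ with vanishing $\partial_z$-component; then $X(x)=A$, $X(y)=B$, $X(z)=0$, $X^2(x)=AA_x+BA_y=:X(A)$, $X^2(y)=AB_x+BB_y=:X(B)$, $X^2(z)=0$, and expanding the determinant (\ref{inflection}) along its last column shows that it equals $z\cdot D$ with $D:=A\,X(B)-B\,X(A)$, a homogeneous polynomial of degree $8$ in $x,y$; thus $I(\FF)=\operatorname{div}(z\cdot D)$, the factor $z$ giving $\ell_\infty$ and $\{D=0\}$ the eight lines through $O$. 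The crux is the claim $T\mid D$. I would first observe that $\{T=0\}$ is a union of $\FF$-invariant lines through $O$: at a point $(x_0,y_0)\neq(0,0)$ with $T(x_0,y_0)=0$ the vector field $X$ is proportional to the radial vector field, hence (by homogeneity) along the whole line $\CC\cdot(x_0,y_0)$, which is therefore invariant. Consequently $X(T)=c\,T$ for some homogeneous $c$ of degree $2$ (factor $T$ into powers of the invariant linear forms and apply Leibniz's rule). Then, applying the linear-dependence identity $\det(v,w)\,u+\det(w,u)\,v+\det(u,v)\,w=0$ in $\CC^2$ to $u=(A,B)$, $v=(x,y)$, $w=(X(A),X(B))$, and using $\det(u,v)=T$, $\det(v,w)=-X(T)$, $\det(w,u)=-D$, I get
\[
T\bigl(X(A),X(B)\bigr)=X(T)\,(A,B)+D\,(x,y),
\]
whence $D\,x=T\bigl(X(A)-cA\bigr)$ and $D\,y=T\bigl(X(B)-cB\bigr)$. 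So $T$ divides both $Dx$ and $Dy$; writing $T=\prod_i L_i^{m_i}$ as a product of pairwise coprime powers of linear forms, each $L_i^{m_i}$ divides $Dx$ and $Dy$, and since $\gcd(x,y)=1$ this forces $L_i^{m_i}\mid D$ (use divisibility by $Dy$ if $L_i$ is proportional to $x$, by $Dx$ if proportional to $y$, either one otherwise). Hence $T\mid D$.

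To finish, $T\mid D\mid z\cdot D$ and $I(\FF)=\operatorname{div}(z\cdot D)$ is reduced by hypothesis, so $z\cdot D$ is square-free and therefore $T$ is square-free; that is, the tangent cone is reduced. The only point requiring care is the last step of the previous paragraph, passing from $T\mid Dx$ and $T\mid Dy$ to $T\mid D$: a careless argument breaks down precisely when one of the coordinate axes lies in the tangent cone, and is salvaged by playing the two divisibilities against each other. It is also worth noting that the hypothesis on the curvature of $\Leg\FF$ is not needed here—reducedness of $I(\FF)$ alone forces the tangent cone to be reduced.
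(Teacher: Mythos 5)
Your proof is correct and follows the same strategy as the paper's: deduce non‑reducedness of $I(\FF)$ from non‑reducedness of the tangent cone (the paper phrases it contrapositively, asserting that $\ell^2\mid yA-xB$ forces $\ell^2\mid I(\FF)$ by a "straightforward computation"). Your determinantal identity $T\,(X(A),X(B))=X(T)\,(A,B)+D\,(x,y)$ together with $X(T)=cT$ actually supplies that omitted computation, proving the sharper global divisibility $T\mid D$, and your handling of the coordinate-axis case via the two divisibilities $T\mid Dx$, $T\mid Dy$ is sound.
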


\begin{proof}
Suppose the assertion is false. Then we would have some line $\ell$ with $\ell^2 \mid yA - xB$. It is a straightforward computation to show that $\ell^2 \mid I(\FF)$, which is a contradiction.
\end{proof}

By the previous lemma we can consider $I(\FF)= \ell_1+ \cdots + \ell_8+\ell_{\infty} $ with $\ell_1, \ldots, \ell_4$ being the lines through $O$ invariant by $\FF$ and $\ell_5, \ldots, \ell_8$ being transversal inflection lines.
Now we make some remarks.
\begin{enumerate}
\item Since $X$ is homogeneous, the tangent directions of $\FF$ restricted to any straight line through the origin are all parallel. Hence
for every $i \in \{5,6,7,8\}$ the restriction $T \FF \mid_{\ell_i}$ is formed by parallel lines through a point $p_i \in \ell_{\infty}$. Since $K(\Leg \FF)$ is holomorphic over $\GG_{\FF}(\ell_i)$ it follows from Theorem~\ref{baricentro} that $p_i \in \Sing(\FF)$.

\item Since $\deg(\FF)=3$ and $\FF$ is saturated we have four different points $p_5, \dots, p_8$. In other words, to different lines $\ell_i \neq \ell_j$ corresponds different singular points $p_i \neq p_j$.

\item It is easily seen that the line joining $p_i$ and $O=(0,0)$ is invariant. So we can assume that this line is $\ell_{i-4}$.
 
\item For every $j \in \{1,2,3,4\}$ define the polynomial $P_j(t)=B(1,t)-m_j A(1,t)$, where $m_j$ is the slope of $\ell_j$ (which is equal to the slope of $X \mid_{\ell_{j+4}}$). 
Since the roots of the polynomial $P_{j}$ are the slopes of the lines (all passing through $O$) in $\mathrm{Tang}(\FF,\mathcal R_{p_{j+4}})$, it
%
has a double root at $n_j =$ slope of $\ell_{j+4}$ and consequently we have $\mathrm{Disc}(P_j(t),t)=0$. On the other hand, since $\deg P_{j}=3$ and $m_{j}\neq n_{j}$ we have $P_j ' (m_j) \neq 0$.

\end{enumerate}

After conjugating with an automorphism of $\PP$ we may assume $yA-xB= xy(y-x)(y - \nu x)$, that is, the $\FF$-invariant lines through $O$ are $\ell_1=\{x=0\}$, $\ell_2=\{y=0\}$, $\ell_3=\{y=x\}$ and $\ell_4=\{y=\nu x\}$, with $\nu \neq 0,1, \infty$. In this case, the foliation $\FF$ is induced by a vector field of the form
$$
X=\left[(\nu + \alpha_1)x^3 + (\alpha_2 - \nu -1)x^2 y +(\alpha_3 +1) x y^2 \right]\Dx + \left[ \alpha_1 x^2 y + \alpha_2 x y^2 + \alpha_3 y^3 \right]\Dy,
$$
for some complex numbers $\alpha_1$, $\alpha_2$ and $\alpha_3$.

Solving the system $\mathrm{Disc}(P_i(t),t)=0$, $t=1,2,3,4$ by using MAPLE, we arrive to a characterization of the homogeneous foliations of degree $3$ with reduced inflection divisor and flat dual web.

\begin{proposition}\label{homogeneo_generico}
Let $\FF$ be a homogeneous foliation of degree $3$ such that $I(\FF)$ is reduced and $\Leg \FF$ is flat. Then, up to conjugation with an automorphism of $\PP$, the foliation $\FF$ is defined by the vector field
$$\left(\frac{1}{4}\nu x^3-\left( \frac{1}{2}+\frac{1}{2}\nu\right)x^2y+\frac{3}{4}xy^2 \right)\frac{\partial}{\partial x}+
 \left(-\frac{3}{4} \nu x^2y+\left(\frac{1}{2}+\frac{1}{2}\nu \right)xy^{2}-\frac{1}{4}y^3 \right)\frac{\partial}{\partial y} ,$$
with $\nu=\frac{1}{2}\pm\frac{\sqrt{3}i}{2}$. 
\end{proposition}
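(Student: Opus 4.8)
The plan is to convert the constraints in the four remarks above, together with the normalization just made, into an explicit polynomial system in $(\alpha_1,\alpha_2,\alpha_3,\nu)$ and to solve it. For each $j\in\{1,2,3,4\}$, let $\widehat{P}_j$ be the binary cubic form cutting out the affine part of the tangency divisor $\Tang(\FF,\mathcal{R}_{p_{j+4}})$, where $\mathcal{R}_{p_{j+4}}$ is the pencil of lines through the singular point $p_{j+4}=\ell_j\cap\ell_\infty$; concretely $\widehat{P}_j=B-m_jA$ for $j=2,3,4$ (so that $\widehat{P}_j(1,t)=P_j(t)$, with $P_j$ as above) while $\widehat{P}_1=A$, and $\deg\widehat{P}_j=3$ because $\FF$ has degree $3$ and is saturated. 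Since $\FF$ is homogeneous, $\Tang(\FF,\mathcal{R}_{p_{j+4}})$ is a union of lines through $O$, so $\widehat{P}_j$ splits completely into linear forms.

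First I would establish, for $j=1,\dots,4$, the factorization $\widehat{P}_j=c\,\ell_{j+4}^2\,\ell_j$ up to a nonzero constant. The factor $\ell_j$ is forced by $p_{j+4}\in\ell_j$ together with the $\FF$-invariance of $\ell_j$, which put $\ell_j\subset\Tang(\FF,\mathcal{R}_{p_{j+4}})$. The factor $\ell_{j+4}^2$ is the inflection input: along $\ell_{j+4}$ the leaf of $\FF$ through a generic point $z$ has tangent line $\overline{zp_{j+4}}\in\mathcal{R}_{p_{j+4}}$, by the definition of $p_{j+4}$, and as $z$ is an inflection point of that leaf --- because $\ell_{j+4}$ is a transversal component of $I(\FF)$ --- this tangent line meets the leaf with multiplicity $\ge 3$; hence $\FF$ and $\mathcal{R}_{p_{j+4}}$ are tangent at $z$ to order $\ge 2$, so $2\ell_{j+4}\subset\Tang(\FF,\mathcal{R}_{p_{j+4}})$. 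Since $\deg\widehat{P}_j=3$ and $\ell_j\neq\ell_{j+4}$, these exhaust $\widehat{P}_j$, which gives the factorization; in particular $\mathrm{disc}(\widehat{P}_j)=0$ and $\ell_j$ appears simply, i.e.\ $P_j'(m_j)\neq 0$.

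It remains to expand the four equations $\mathrm{disc}(\widehat{P}_j)=0$ in $(\alpha_1,\alpha_2,\alpha_3,\nu)$ using the displayed shape of $A$ and $B$, and to solve. Since $\widehat{P}_j$ carries $\ell_j$ as a known factor, $\mathrm{disc}(\widehat{P}_j)=0$ reduces to the complementary quadratic factor being a perfect square, subject to the requirement $P_j'(m_j)\neq 0$ (that $\ell_j$ stay simple), which holds under our hypotheses and discards the spurious components of the solution set --- for example $\alpha_1=0$ for $j=2$, where $\widehat{P}_2(1,t)=t(\alpha_3t^2+\alpha_2t+\alpha_1)$. Concretely $\mathrm{disc}(\widehat{P}_1)$ is, up to a unit, $(\alpha_2-\nu-1)^2-4(\alpha_3+1)(\nu+\alpha_1)$, and three analogous relations come from $j=2,3,4$ after dividing $\widehat{P}_j(1,t)$ by $t$, $t-1$ and $t-\nu$ respectively. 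Adjoining the open conditions $\nu\neq 0,1$ and $\alpha_3\neq 0$ (saturation) makes this a zero-dimensional system, which I would solve with a computer algebra system (MAPLE); the only surviving solutions are $\alpha_1=-\tfrac34\nu$, $\alpha_2=\tfrac12(1+\nu)$, $\alpha_3=-\tfrac14$ with $\nu^2-\nu+1=0$, i.e.\ $\nu=\tfrac12\pm\tfrac{\sqrt{3}\,i}{2}$, and feeding them back into $X$ produces the stated vector field. The one genuine obstacle is this elimination step, together with the routine a posteriori check that the two resulting foliations do have reduced inflection divisor and flat Legendre transform, which shows the list is exactly as claimed.
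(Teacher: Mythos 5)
Your proof follows essentially the same route as the paper: the four conditions $\mathrm{Disc}(P_j(t),t)=0$ coming from the double root of the tangency polynomial $B-m_jA$ along each transversal inflection line $\ell_{j+4}$, solved by elimination in MAPLE subject to the nondegeneracy $P_j'(m_j)\neq 0$. The extra details you supply --- the intersection-theoretic justification of the double root, the separate treatment of $j=1$ where the slope is infinite, and the identification of the spurious components of the discriminant locus --- only make explicit what the paper leaves implicit, and your final solution matches the stated vector field.
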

\section{Main theorem}
In this section we prove our main result: a generalization of proposition \ref{homogeneo_flat} for any degree $3$ foliation. 
We begin with some auxiliary results.

\begin{lemma}\label{eta} Let $h_{i}(x,y)$ be germs of holomorphic functions at the origin of $\CC^{2}$.
Consider $\omega_{i}=dy+y^{a}h_{i}(x,y)dx$, $i=1,2,3$, with $a\in\mathbb N$ 
such that $y$ does not divide $h_{ij}(x,y):=h_{i}(x,y)-h_{j}(x,y)$ for $i\neq j$. Then the curvature of the $3$-web given by $\prod\limits_{i=1}^{3}\omega_{i}=0$ is holomorphic along $y=0$ if and only if $y^{a}$ divides $$\frac{\partial}{\partial x}\left(\frac{h_{12}(x,y)\partial_{x}h_{23}(x,y)-h_{23}(x,y)\partial_{x}h_{12}(x,y)}{h_{12}(x,y)h_{23}(x,y)h_{31}(x,y)}\right).$$ 
\end{lemma}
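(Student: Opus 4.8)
The plan is to compute the curvature $K$ of the $3$-web $\prod_i\omega_i=0$ through the Blaschke--Dubourdieu construction (see \cite[\S2.2]{MarPer}) and to read off its polar part along $\{y=0\}$; every computation is carried out locally near a generic point of that line, where $P:=h_{12}h_{23}h_{31}$ and the $h_{ij}$ are units, by the hypothesis $y\nmid h_{ij}$.

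First I would normalize the defining forms. Looking for holomorphic $f_i$ with $f_1\omega_1+f_2\omega_2+f_3\omega_3=0$ amounts to $\sum_i f_i=0$ and $\sum_i f_ih_i=0$, which is solved by $f_1=h_{23}$, $f_2=h_{31}$, $f_3=h_{12}$ thanks to the identities $h_{12}+h_{23}+h_{31}=0$ and $h_1h_{23}+h_2h_{31}+h_3h_{12}=0$. Thus $\eta_i:=f_i\omega_i=h_{jk}(dy+y^ah_i\,dx)$ (with $(i,j,k)$ cyclic) defines the same web and satisfies $\eta_1+\eta_2+\eta_3=0$. A direct computation gives $\eta_1\wedge\eta_2=y^aP\,dx\wedge dy$ and $d\eta_i=N_i\,dx\wedge dy$ with $N_i:=\partial_x h_{jk}-\partial_y(y^ah_ih_{jk})$; the same identities give $N_1+N_2+N_3=0$. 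Hence $a_i:=N_i/(y^aP)$ satisfies $d\eta_i=a_i\,\eta_1\wedge\eta_2$ with $a_1+a_2+a_3=0$, the curvature is $K=d\gamma$ for the unique $1$-form $\gamma$ with $d\eta_i=\gamma\wedge\eta_i$ (which exists because $\eta_1+\eta_2+\eta_3=0$), and using $\{\eta_3,\eta_1\}$ as a coframe one gets $\gamma=a_1\eta_3-a_3\eta_1$, that is
$$\gamma=\frac{N_1h_{12}-N_3h_{23}}{y^aP}\,dy+\frac{N_1h_{12}h_3-N_3h_{23}h_1}{P}\,dx.$$

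The crucial step is to isolate the poles of $\gamma$ along $\{y=0\}$. Its $dx$--coefficient has a holomorphic numerator and the unit denominator $P$, so it is holomorphic at a generic point of $\{y=0\}$; denote it by $\psi$. For the $dy$--coefficient I would expand $N_1h_{12}-N_3h_{23}$ by Leibniz, writing $\partial_y(y^ah_ih_{jk})=ay^{a-1}h_ih_{jk}+y^a\partial_y(h_ih_{jk})$: the $\partial_x$--part is $h_{12}\partial_x h_{23}-h_{23}\partial_x h_{12}=PQ$, where $Q:=\frac{h_{12}\partial_x h_{23}-h_{23}\partial_x h_{12}}{P}$ is the quantity in the statement; each term carrying a factor $y^a$ gives, after division by $y^aP$, a function holomorphic at a generic point of $\{y=0\}$; and the $y^{a-1}$--terms combine, via $h_3-h_1=h_{31}$, to $ay^{a-1}h_{12}h_{23}h_{31}=ay^{a-1}P$, hence contribute $\frac ay$. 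Therefore $\gamma=\psi\,dx+\bigl(\frac{Q}{y^a}+\frac ay+\phi_0\bigr)dy$ with $\psi$ and $\phi_0$ holomorphic at a generic point of $\{y=0\}$. (The same identities also show that $Q$ is cyclically symmetric, $Q=\frac{h_{23}\partial_x h_{31}-h_{31}\partial_x h_{23}}{P}=\frac{h_{31}\partial_x h_{12}-h_{12}\partial_x h_{31}}{P}$, so the final condition is independent of the coframe chosen.)

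Finally I would differentiate. Since $\partial_x(a/y)=0$, one has $K=d\gamma=\bigl(\frac{\partial_x Q}{y^a}+\partial_x\phi_0-\partial_y\psi\bigr)\,dx\wedge dy$; as $\partial_x\phi_0$ and $\partial_y\psi$ are holomorphic at a generic point of $\{y=0\}$, the $2$-form $K$ is holomorphic along $\{y=0\}$ if and only if $\frac{\partial_x Q}{y^a}$ is, i.e. if and only if $y^a$ divides $\partial_x Q$. I expect the only genuine difficulty to be the $dy$--coefficient computation: one has to keep track of which of the several terms produced by the two Leibniz expansions are harmless — holomorphic, divisible by $y^a$, or amounting to the closed form $\frac ay\,dy$, which is invisible to $d$ — and the cancellations that make this work are precisely the linear identities among the $h_{ij}$ recorded above.
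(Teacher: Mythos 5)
Your proposal is correct and follows essentially the same route as the paper: normalize the three $1$-forms so that they sum to zero, compute the Blaschke connection form, isolate the polar part $\frac{Q}{y^{a}}+\frac{a}{y}$ of its $dy$-coefficient, and observe that the $\frac{a}{y}$ term is annihilated by $\partial_x$. The only (immaterial) difference is the normalizing factor — you use $h_{jk}\omega_i$ where the paper uses $\delta_{jk}\omega_i=y^{a}h_{jk}\omega_i$, which shifts the connection form by the exact term $a\,\frac{dy}{y}$ (whence the paper's $\frac{2a}{y}$ versus your $\frac{a}{y}$) and leaves the curvature unchanged.
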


\begin{proof}
We mimic the proof of Lemma 2.2 in \cite{MarPer}. 
Writing $\omega_{i}\wedge\omega_{j}=\delta_{ij}dx\wedge dy$ we obtain a normalization $\omega_{1}\delta_{23}+\omega_{2}\delta_{31}+\omega_{3}\delta_{12}=0$. Then, there exists a unique $1$-form $\eta=U\,dx+V\,dy$ such that for each cyclic permutation $(i,j,k)$ of $(1,2,3)$ we have $d(\delta_{ij}\omega_{k})=\delta_{ij}\omega_{k}\wedge\eta$. Using that $\delta_{ij}=y^{a}h_{ij}$ we obtain the following linear system satisfied by the unknowns $U$ and $V$:
\begin{eqnarray*}
h_{12}U-h_{12}h_{3}y^{a}V&=&\partial_{x}h_{12}-2ay^{a-1}h_{12}h_{3}-y^{a}\partial_{y}(h_{12}h_{3})\\
h_{23}U-h_{23}h_{1}y^{a}V&=&\partial_{x}h_{23}-2ay^{a-1}h_{23}h_{1}-y^{a}\partial_{y}(h_{23}h_{1})
\end{eqnarray*}
whose unique solution satisfies that $U(x,y)$  and 
$$V(x,y)-\left(\frac{h_{12}(x,y)\partial_{x}h_{23}(x,y)-h_{23}(x,y)\partial_{x}h_{12}(x,y)}{h_{12}(x,y)h_{23}(x,y)h_{31}(x,y)}\right)\frac{1}{y^{a}}-\frac{2a}{y}$$
are holomorphic along $y=0$ thanks to $y\nmid h_{12}h_{23}h_{31}$. The result follows now easily from the fact that the curvature of the given $3$-web is  $d\eta=(\partial_{x}V-\partial_{y}U)dx\wedge dy$.
\end{proof}

\begin{lemma}\label{homogeneo_flat_si_y_sólo_si_no_homogeneo_flat}
Let $\FF$ be a foliation induced by a vector field of the form $X = X_3 +H\cdotp R$, with $X_{3}$ not having multiple components on his singular set,
around the singularity $O=(0,0)$ and denote by $\FF_h$ the homogeneous foliation defined by $X_3$. Then $K(\Leg \FF)$ is holomorphic at the generic point of $\check{O}$ if and only if $K(\Leg \FF_h)$ is holomorphic at the generic point of $\check{O}$. 
\end{lemma}

\begin{proof} The web
$\Leg(\FF)$ is given by the implicit differential equation $F(p,q;x):=F_{3}(p,q;x)+q\,G_{3}(p,q;x)=0$ where $x=-\frac{dq}{dp}$ and $F_{3}(p,q;x)=0$ is the implicit differential equation defining $\Leg(\FF_{h})$. From the homogeneity of $X_{3}$ follows that $F_{3}(p,q;q\lambda)=q^{3}\tilde F_{3}(p,\lambda)$. From the square-free property on $X_{3}$ follows that the discriminant of $\tilde F_{3}(p,\lambda)$ with respect to $\lambda$ does not vanish identically. Hence, we can write $\tilde F_{3}(p,\lambda)=c(p)\prod\limits_{i=1}^{3}(\lambda-\lambda_{i}(p))$, with $\lambda_{i}(p)\not\equiv\lambda_{j}(p)$ if $i\neq j$. Consequently, 
$$F_{3}(p,q;x)=c(p)\prod\limits_{i=1}^{3}(x-q\lambda_{i}(p))\quad\textrm{and}\quad F(p,q;x)=c(p,q)\prod\limits_{i=1}^{3}(x-qh_{i}(p,q)),$$
with $h_{i}(p,0)=\lambda_{i}(p)$. Then we can apply Lemma~\ref{eta} with $a=1$ in order to deduce that $K(\Leg\FF)$ is holomorphic along $\check{O}=\{q=0\}$ if and only if
$$\frac{\partial}{\partial p}\left(\frac{h_{12}(p,0)\partial_{p}h_{23}(p,0)-h_{23}(p,0)\partial_{p}h_{12}(p,0)}{h_{12}(p,0)h_{23}(p,0)h_{31}(p,0)}\right)=0,$$
which is exactly the condition for the holomorphy of $K(\Leg(\FF_{h}))$ at the generic point of $\check{O}$ by applying again Lemma~\ref{eta}.
\end{proof}

\begin{lemma}\label{no_homogeneo_holomorfo_implica_homogeneo_holomorfo}
Under the above notation, if moreover $\Sing(\FF_h)$ has no multiple components and $K(\Leg \FF)$ is holomorphic at the generic point of $\GG_{\FF}(I(\FF))$ then $K(\Leg \FF_h)$ is holomorphic at the generic point of $\GG_{\FF_h}(I(\FF_h))$.  
\end{lemma}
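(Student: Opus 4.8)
The plan is to localize everything near the points where the curve components of $\GG_{\FF_h}(I(\FF_h))$ meet $\check O$, and to reduce the holomorphy of $K(\Leg \FF_h)$ there to the same numerical condition that the hypothesis imposes on $\Leg \FF$. First I would identify the relevant components of $\GG_{\FF_h}(I(\FF_h))$. By the extra hypothesis on $\Sing(\FF_h)$, together with the standing hypothesis on $X_3$, the divisor $I(\FF_h)$ is reduced, so, exactly as in Section~3.1, $I(\FF_h)=\ell_\infty+\ell_1+\cdots+\ell_8$ with $\ell_1,\dots,\ell_4$ the $\FF_h$-invariant lines through $O$ and $\ell_5,\dots,\ell_8$ transversal inflection lines through $O$. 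The Gauss image of an invariant line, and of $\ell_\infty$, is a single point; and since $\FF_h$ is homogeneous the tangent lines of $\FF_h$ along $\ell_i$ ($i\ge5$) are parallel, so $\GG_{\FF_h}(\ell_i)$ is the pencil of lines through a point $p_i\in\ell_\infty$, that is, the line $\check p_i=\{p=\mu_i\}$ in the affine chart $(p,q)$ of $\Pd$, where $\mu_i$ is the slope of $X_3|_{\ell_i}$; this line meets $\check O=\{q=0\}$ at the single point dual to the line of slope $\mu_i$ through $O$. Hence it is enough to prove that $K(\Leg \FF_h)$ is holomorphic along each $\{p=\mu_i\}$, $i=5,\dots,8$.

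Next I would match each $\{p=\mu_i\}$ with a component of $\GG_\FF(I(\FF))$. The determinant~(\ref{inflection}) depends, to lowest order at $O$, only on the lowest-order jet $X_3$ of $X=X_3+H\cdot R$; since the affine equation of $I(\FF_h)$ is, up to a constant, the product $\ell_1\cdots\ell_8$ of eight distinct linear forms, the curve $I(\FF)$ has multiplicity $8$ at $O$ with tangent cone $\ell_1\cdots\ell_8$, hence eight smooth local branches, one tangent to each $\ell_k$. Let $B_i$ be the branch tangent to $\ell_i$, $i\ge5$, and $\hat B_i\subseteq I(\FF)$ the component containing it. Writing the three leaf slopes of $\Leg \FF$ near $\check O$ as $x=q\,h_k(p,q)$, $k=1,2,3$, with $h_k(p,0)=\lambda_k(p)$ (as in the proof of the previous lemma, cf.~(\ref{F2})), a point running along $B_i$ to $O$ has $\FF$-tangent line tending to the line of slope $\mu_i$ through $O$; thus $\GG_\FF(B_i)$ is a smooth curve germ through the point $\{p=\mu_i\}\cap\check O$, transverse to $\check O$, and it is a local component of $\Delta(\Leg \FF)$ lying inside $\GG_\FF(I(\FF))$ (Lemma~\ref{discriminant}).

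Then I would transfer the holomorphy. Along a generic point of $\GG_\FF(\hat B_i)$, resp.\ of $\GG_{\FF_h}(\ell_i)$ (so $q\ne0$ and away from $\check O$), exactly two of the three leaves of the Legendre web are mutually tangent along the component and the third is transverse to it; Theorem~\ref{baricentro} then makes the holomorphy of the curvature along the component equivalent to the component being invariant by the $2$-subweb formed by the tangent pair or by the foliation given by the transverse leaf. The colliding pair is some $\{1,2\}$ with $\lambda_1(\mu_i)=\lambda_2(\mu_i)$, finite by the reducedness of $I(\FF_h)$, and it is the same pair for $\Leg \FF_h$ along $\{p=\mu_i\}$. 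The merged direction of the tangent pair has finite slope $-q\,h_1$ (resp.\ $-q\,\lambda_1$) along the component, which is transverse to $\check O$, so the component can never be invariant by the tangent pair; hence holomorphy is, in both cases, equivalent to invariance by the transverse leaf $dq+q\,h_3(p,q)\,dp=0$ (resp.\ $dq+q\,\lambda_3(p)\,dp=0$), and this happens precisely when $h_3$, resp.\ $\lambda_3$, has a pole at $p=\mu_i$ — equivalently, when $\mu_i$ is a slope of an $\FF_h$-invariant line. As this last condition depends only on the common datum $h_3(p,0)=\lambda_3(p)$, it holds for $\FF$ exactly when it holds for $\FF_h$; so the hypothesis, which gives the holomorphy of $K(\Leg \FF)$ along every $\GG_\FF(\hat B_i)$, yields the holomorphy of $K(\Leg \FF_h)$ along every $\{p=\mu_i\}$, and we are done by the first step.

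The hard part is this last step: even though $\GG_\FF(\hat B_i)$ and $\GG_{\FF_h}(\ell_i)$ live in different webs, one must genuinely verify that the two curvature-holomorphy conditions coincide. The delicate points are: to justify applying Theorem~\ref{baricentro} along the whole component and, by a limiting argument, down to its intersection with $\check O$, where all three leaves osculate and the superposition degenerates; to control the monodromy exchanging the three leaves of $\Leg \FF$ over the component; and to deal with the singular $1$-form $dq+q\,h_3(p,q)\,dp$ when $\lambda_3$ has a pole at $\mu_i$, so that ``invariance by the transverse leaf'' is correctly read off as ``$\lambda_3$ has a pole at $\mu_i$'' (equivalently, $\mu_i$ a slope of an $\FF_h$-invariant line; cf.\ the remarks of Section~3.1). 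One also has to discard the degenerate configurations excluded by the reducedness of $I(\FF_h)$ — such as two leaf slopes escaping to infinity simultaneously, or several $\mu_i$ coinciding — where the list of discriminant components changes.
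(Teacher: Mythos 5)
Your strategy (matching each component of $\GG_{\FF_h}(I(\FF_h))$ with a local branch of $I(\FF)$ at $O$ and transferring holomorphy through Theorem~\ref{baricentro}) is genuinely different from the paper's, but as written it has two real gaps. The first is at the very start: the hypothesis that $\Sing(\FF_h)$ has no multiple components does \emph{not} imply that $I(\FF_h)$ is reduced. That reducedness is an extra genericity assumption made only in Section~3.1 (and there it is what \emph{implies} the tangent cone is reduced, not the other way around); under the hypotheses of the lemma the tangent cone of $X_3$ may have repeated factors, transverse inflection lines may coincide with each other or with invariant lines, and then the decomposition $I(\FF_h)=\ell_\infty+\ell_1+\cdots+\ell_8$ into nine distinct lines, the eight smooth branches of $I(\FF)$ at $O$, and the "exactly two leaves collide" normal form needed for Theorem~\ref{baricentro} all fail. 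Since the lemma is invoked in the proof of Theorem~\ref{teorema-principal} precisely without assuming $I(\FF)$ or $I(\FF_h)$ reduced, this case cannot be discarded. The second gap is the one you yourself flag as "the hard part": the chain (holomorphy along $\GG_\FF(\hat B_i)$) $\Leftrightarrow$ (invariance by the transverse leaf) $\Leftrightarrow$ ($\lambda_3$ has a pole at $\mu_i$) is only argued in one direction and rests on unverified claims — that $\GG_\FF(\hat B_i)$ is smooth and transverse to $\check O$ at $(\mu_i,0)$, that Theorem~\ref{baricentro} (stated for generic, smooth points of the discriminant) can be pushed to the point $(\mu_i,0)$ where $\GG_\FF(\hat B_i)$ meets $\check O$ and all three roots degenerate, and that invariance of the whole component can be read off from a pole at a single boundary point. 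Acknowledging these difficulties does not resolve them, so the proof is incomplete.

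For comparison, the paper avoids all of this local analysis by a degeneration argument: setting $h_c(x,y)=(cx,cy)$, the foliations $\FF_c=h_c^*\FF$, given by $X_3+c\,H\cdot R$, are projectively conjugate to $\FF$ for $c\neq 0$, so Proposition~\ref{holomorfia_singularidad_menor_2}, Lemma~\ref{solo_una_singularidad_3} and the hypothesis give that $K(\Leg\FF_c)$ is holomorphic on $\Pd\setminus\check O$ for all $c\neq 0$. The components of $\GG_{\FF_h}(I(\FF_h))$ are either points (handled by Hartogs) or lines $\{p=p_0\}$; restricting the family $K(\Leg\FF_c)$ to a disc transverse to such a line and applying the Cauchy integral formula plus Montel shows the limit $f_0$ (the coefficient of $K(\Leg\FF_h)$) is holomorphic there. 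This works uniformly, with no reducedness of $I(\FF_h)$ and no case analysis of the discriminant. If you want to keep your direct approach you would need, at minimum, to treat the non-reduced configurations of $I(\FF_h)$ separately and to supply the missing transversality and limiting arguments; the normal-families argument is considerably more economical.
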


\begin{proof}
For any $c \in \CC$ consider $h_c(x,y)=(cx,cy)$ and $\FF_{c}= h_{c}^{*}(\FF)$ the foliation given by $X_3+c\cdotp H\cdotp R$. Combining our hypothesis with proposition~\ref{holomorfia_singularidad_menor_2} and lemma~\ref{solo_una_singularidad_3} we deduce that $K(\Leg \FF_c)$ is holomorphic on $\Pd\setminus \check{O}$ whenever $c\neq 0$.
If every component of $I(\FF_h)$ is $\FF_h$-invariant
the image by $\mathcal{G}_{\FF_{h}}$	of an irreducible component of $I(\FF_{h})$ is a point. Being holomorphic in a pointed neighborhood of this point, the curvature 
$K(\Leg\FF)$ extends holomorphically by Hartogs extension theorem.
Now take a non invariant line $\ell$ of $I(\FF_h)$. Since $X_3$ is homogeneous, the tangent directions of $\FF_{h}\mid_{\ell}$ are all parallel, thus $\GG_{\FF_h}(\ell)$ is a line of the form $\{p = p_0\}$ in local coordinates $(p,q)$ on $\Pd$ (even if $\ell$ is a line of singularities). Take a generic point $B \in \{p = p_0\}$ and a transverse 
holomorphic section $\Sigma$ through $B$ parametrized by the unit 
 disc $\DD$. Taking the coefficients of the forms $K(\Leg \FF_c)\mid_{\Sigma}$ we obtain a family $f_c(z)$ of functions such that $f_c$ is holomorphic in $\DD$ if $c\neq 0$, and $f_{0}$ is holomorphic in $\DD^{*}$. By Cauchy's theorem one can write
$$
f_c(z)=\frac{1}{2 \pi i}\int_{\mid w \mid = \epsilon}\frac{f_c(w)}{z-w}dw,
$$
for $\epsilon >0$ small enough, $\mid z \mid< \epsilon /2$ and $0 < \mid c \mid \leq 1$. We observe that the right side is a uniformly bounded family even considering $c=0$ and we use Montel's theorem to deduce that $f_{0}$ is also holomorphic at $0$. This
finishes the proof. 
\end{proof}

In order to state our main result we need the following lemma.

\begin{lemma}\label{forma_con_polos_q}
Given a meromorphic $2$-form $\omega = \frac{P(p,q)}{Q(p,q)}dp \wedge dq$ on $\Pd$. Suppose that $q \nmid P$ and $(\omega)_{\infty} \subseteq \check{O} = \{q=0\}$. Then $q^3 \mid Q$.
\end{lemma}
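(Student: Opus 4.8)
The plan is to regard $\omega$ as a rational section of the canonical bundle $K_{\Pd}=\OO_{\Pd}(-3)$ and to exploit that $H^0(\Pd,\OO_{\Pd}(m-3))=0$ whenever $m<3$ (in particular that $\Pd$ carries no nonzero holomorphic $2$-form).

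First I would clear common factors: writing $g=\gcd(P,Q)$, $P=g\,P_1$, $Q=g\,Q_1$ with $\gcd(P_1,Q_1)=1$, one has $\omega=\frac{P_1}{Q_1}\,dp\wedge dq$, and $q\nmid P$ forces $q\nmid P_1$; in particular $P_1\not\equiv 0$, so $\omega\neq 0$. Since $dp\wedge dq$ is a nowhere vanishing holomorphic $2$-form on the affine chart $\CC^2_{p,q}$, the polar set of $\omega$ inside this chart is exactly $\{Q_1=0\}$, and the hypothesis $(\omega)_\infty\subseteq\check{O}=\{q=0\}$ forces $\{Q_1=0\}\subseteq\{q=0\}$. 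As $q$ is prime in the UFD $\CC[p,q]$, this means $Q_1=c\,q^{m}$ for a constant $c\neq 0$ and an integer $m\geq 0$; hence $Q=c\,g\,q^{m}$ and it suffices to prove $m\geq 3$.

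Now I would read off $m$ as the pole order of $\omega$ along $\check{O}$: since $q\nmid P_1$, the function $P_1/(c\,q^{m})$ has a pole of order exactly $m$ at the generic point of $\{q=0\}$ while $dp\wedge dq$ is holomorphic and nonvanishing there, so $(\omega)_\infty=m\,\check{O}$ as divisors on $\Pd$. Therefore $\omega$ is a nonzero global section of $K_{\Pd}\otimes\OO_{\Pd}(m\,\check{O})\cong\OO_{\Pd}(m-3)$, which forces $m-3\geq 0$; thus $q^3\mid q^{m}\mid Q$. (Should one wish to avoid line bundles, the same bound follows by homogenizing with $p=x/z$, $q=y/z$ and using $dp\wedge dq=z^{-3}(x\,dy\wedge dz+y\,dz\wedge dx+z\,dx\wedge dy)$: the coefficient of the line at infinity $\{z=0\}$ in $(\omega)$ equals $m-\deg P_1-3$, and the hypothesis kills this pole since $\{z=0\}\neq\check{O}$, giving $m\geq\deg P_1+3\geq 3$.)

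The only point requiring care is the bookkeeping in the reduction step --- making sure that passing to lowest terms together with $q\nmid P$ genuinely identifies ``the order of the pole of $\omega$ along $\check{O}$'' with ``the exponent of $q$ dividing $Q$''. Once that is in place, the vanishing of $H^0(\Pd,\OO_{\Pd}(m-3))$ for $m<3$ does all the remaining work.
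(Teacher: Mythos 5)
Your proof is correct and follows essentially the same route as the paper: after reducing the fraction so that the denominator is a power of $q$, both arguments bound the pole order along $\check{O}$ from below by the degree count $m \geq \deg P_1 + 3 \geq 3$ coming from holomorphy along the line at infinity (the paper does this via the explicit chart $a=p/q$, $b=1/q$; your $K_{\Pd}\cong\OO_{\Pd}(-3)$ formulation and your parenthetical homogenization are the same computation in intrinsic and homogeneous clothing). Your treatment of the reduction to $Q_1=c\,q^m$ is in fact slightly more careful than the paper's, which simply writes $\omega=T/q^{\nu}\,dp\wedge dq$ ``by hypothesis.''
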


\begin{proof}
By hypothesis we can write $\omega = \frac{T(p,q)}{q^{\nu}}dp \wedge dq$. After doing change of coordinates $a= \frac{p}{q}$ and $b=\frac{1}{q}$ we have $\omega = b^{\nu - n -3} \tilde{T}(a,b) da \wedge db$, where $n=\deg(T)$. Since $\omega$ has no poles at the infinite line we must have $\nu \geq n+3 \geq 3$.
\end{proof}

Now we state our main result.

\begin{theorem}\label{teorema-principal}
Let $\FF$ be a foliation of degree $3$ on $\PP$ such that $\Sing(\FF)$ has no multiple components. Then $\Leg \FF$ is flat if and only if $K(\Leg \FF)$ is holomorphic at the generic point of $\GG_{\FF}(I(\FF))$.
\end{theorem}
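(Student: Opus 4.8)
The ``only if'' direction is trivial, since flatness of $\Leg\FF$ means $K(\Leg\FF)\equiv 0$, which is in particular holomorphic everywhere. For the ``if'' direction, the plan is to show that holomorphy of the curvature along the (roughly transverse) part $\GG_{\FF}(I(\FF))$ of the discriminant forces holomorphy along the remaining (invariant) part $\check{\Sigma}(\FF)$ as well; since by Lemma~\ref{discriminant} we have $\Delta(\Leg\FF)=\GG_{\FF}(I(\FF))\cup\check{\Sigma}(\FF)$, the curvature is then holomorphic at the generic point of every component of $\Delta(\Leg\FF)$, hence globally holomorphic on $\Pd$, hence zero since $\Pd$ carries no nonzero holomorphic $2$-form.

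\textbf{Reduction to a single bad singularity.} By Proposition~\ref{holomorfia_singularidad_menor_2}, the dual line $\check{s}$ of any singularity $s$ with $\nu(\FF,s)\le 2$ never carries a pole of $K(\Leg\FF)$; so the only components of $\check\Sigma(\FF)$ that could a priori carry poles are dual lines of singularities with $\nu(\FF,s)\ge 3$, and radial singularities. For the former, Lemma~\ref{solo_una_singularidad_3} tells us there is at most one such point $p$; after an affine change of coordinates we put $p=O=(0,0)$ and, using the remark following Proposition~\ref{holomorfia_singularidad_menor_2}, we may assume the vector field has the form $X=X_3+H\cdot R$. Now I would split according to whether the ``homogeneous part'' $X_3$ has multiple components on its singular set. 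If $\Sing(\FF_h)$ (equivalently the tangent cone $X_3$, up to the usual identifications) has no multiple components, I invoke the chain Lemma~\ref{no_homogeneo_holomorfo_implica_homogeneo_holomorfo} $\Rightarrow$ Proposition~\ref{homogeneo_flat} applied to $\FF_h$: holomorphy of $K(\Leg\FF)$ along $\GG_{\FF}(I(\FF))$ gives holomorphy of $K(\Leg\FF_h)$ along $\GG_{\FF_h}(I(\FF_h))$, hence $K(\Leg\FF_h)\equiv 0$, and then Lemma~\ref{homogeneo_flat_si_y_s\'olo_si_no_homogeneo_flat} transports this back to give holomorphy of $K(\Leg\FF)$ at the generic point of $\check O$.

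\textbf{The degenerate tangent cone and the radial singularities.} The remaining case is when $X_3$ has a multiple component; here the hypothesis ``$\Sing(\FF)$ has no multiple components'' is what saves us, because it forces $H\cdot R$ to break the multiplicity, and one can argue directly as in Proposition~\ref{holomorfia_singularidad_menor_2} (passing to a suitable branched covering $q=s^2$, or appealing to Lemma~\ref{eta} and Lemma~\ref{forma_con_polos_q} to bound the pole order of $K(\Leg\FF)$ along $\check O$). Concretely, writing $K(\Leg\FF)=\frac{P(p,q)}{Q(p,q)}dp\wedge dq$ with $(K(\Leg\FF))_\infty\subseteq\check O$, Lemma~\ref{forma_con_polos_q} gives $q^3\mid Q$ if $q\nmid P$, while the covering trick of Proposition~\ref{holomorfia_singularidad_menor_2} bounds the pole order by $1/2$, a contradiction unless there is in fact no pole. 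Finally, for radial singularities one argues, exactly as in the radial case of Proposition~\ref{holomorfia_singularidad_menor_2}, by invoking Propositions~3.3 and~2.6 of \cite{MarPer}.

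\textbf{Main obstacle.} The delicate point is the bookkeeping in the degenerate case $X_3$ with a multiple component together with the hypothesis only on $\Sing(\FF)$ (not on $\Sing(\FF_h)$): one must check that reducedness of $\Sing(\FF)$ genuinely prevents a high-order pole of the curvature along $\check O$ even when the homogeneous approximation $\FF_h$ is badly degenerate, which is where the branched-covering normalization and the pole-order estimate of Lemma~\ref{forma_con_polos_q} have to be combined carefully. Everything else is an assembly of the already-established Propositions~\ref{holomorfia_singularidad_menor_2}, \ref{homogeneo_flat} and Lemmas~\ref{discriminant}, \ref{solo_una_singularidad_3}, \ref{homogeneo_flat_si_y_s\'olo_si_no_homogeneo_flat}, \ref{no_homogeneo_holomorfo_implica_homogeneo_holomorfo}.
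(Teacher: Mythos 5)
Your overall architecture coincides with the paper's: the ``only if'' direction being immediate, the reduction to the dual lines of singularities via Lemma~\ref{discriminant}, Proposition~\ref{holomorfia_singularidad_menor_2} and Lemma~\ref{solo_una_singularidad_3}, and the treatment of the case where $\Sing(\FF_h)$ is reduced via Lemma~\ref{no_homogeneo_holomorfo_implica_homogeneo_holomorfo}, Proposition~\ref{homogeneo_flat} and the lemma comparing $K(\Leg\FF)$ with $K(\Leg\FF_h)$ along $\check O$, are exactly the paper's steps.

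The gap is in the degenerate case where $\Sing(\FF_h)$ has a multiple component, which you yourself flag as the ``main obstacle'' but do not actually resolve. You propose to bound the pole order of $K(\Leg\FF)$ along $\check O$ by $1/2$ via the branched covering $q=s^2$ and to play this against the bound $q^3\mid Q$ of Lemma~\ref{forma_con_polos_q}. But the covering trick in Proposition~\ref{holomorfia_singularidad_menor_2} works only because, for the specific symmetric form $c_3\,dq^3+qc_2\,dq^2\cdot dp+qc_1\,dq\cdot dp^2+q^2c_0\,dp^3$ arising there, one \emph{checks} that $\pi^*(\Leg\FF)$ is regular at the generic point of $\{s=0\}$; when $X_3=y^2\cdot(\ldots)$ there is no reason for the pulled-back web to be regular along $s=0$, and you do not verify it. Moreover, the way the contradiction is actually closed is not by an upper pole-order estimate: the paper computes $K(\Leg\FF)=\frac{P_0(p,q)}{q\,R_0(p,q)^2}\,dp\wedge dq$ explicitly with Ripoll's script, finds $R_0(p,0)=4c_3(a_1p-a_2)^3$, and then uses Lemma~\ref{forma_con_polos_q} (applicable because, arguing by contradiction, the poles are confined to $\check O$) to force $q\mid R_0$, i.e. $c_3(a_1p-a_2)^3\equiv 0$; iterating this coefficient-killing --- and using at one point that $c_2\neq 0$ precisely because $\Sing(\FF)$ is reduced --- drives $X_3$ to $0$, so $X=H\cdot R$ defines an algebraic, hence flat, web, contradicting $K(\Leg\FF)\not\equiv 0$. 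Some such explicit computation (or a genuine substitute for it) is the missing content: as written, the degenerate case is asserted rather than proved.
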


\begin{proof}
By proposition \ref{holomorfia_singularidad_menor_2} and lemma \ref{solo_una_singularidad_3} we just need to look at a singular point, which we may take as being $O=(0,0)$, of the form $X= X_3 + H\cdotp R$ as before. 

If $\Sing(\FF_h)$ has no multiple components then we invoke lemma \ref{no_homogeneo_holomorfo_implica_homogeneo_holomorfo} and proposition \ref{homogeneo_flat} to conclude that $K(\Leg \FF_h) \equiv 0$. It follows from lemma \ref{homogeneo_flat_si_y_sólo_si_no_homogeneo_flat} that $K(\Leg \FF)$ is holomorphic on $\PP$ and so it must be zero. 

If $\Sing(\FF_h)$ has a multiple component, without loss of generality we can write 
$$
X_3 = y^2 \cdotp \left[(a_1 x+ b_1 y)\Dx  + (a_2 x+ b_2 y)\Dy \right] , \hspace{0.1cm} H(x,y)= c_0 y^3 + c_1 xy^2 + c_2 x^2 y + c_3x^3.
$$
Using Ripoll's maple script (see \cite{Rip}) we obtain that $K(\Leg \FF)$ has the form
$$
K(\Leg \FF)=\frac{P_{0}(p,q)}{q\,R_{0}(p,q)^{2}}\,dp\wedge dq
$$
with $R_{0}(p,0)=4c_{3}(a_{1}p-a_{2})^{3}$. 
Reasoning by reductio ad absurdum,
assume that $K(\Leg\FF)\not\equiv 0$ is holomorphic at the generic point of $\GG_{\FF}(I(\FF))$. 
By lemma~\ref{discriminant} and proposition~\ref{holomorfia_singularidad_menor_2}
we deduce that $K(\Leg\FF)$ is holomorphic on $\check{\PP}\setminus\check{O}$.
Using lemma~\ref{forma_con_polos_q} we obtain that $q|R_{0}$ and we can distinguish two situations:
\begin{enumerate}[(i)]
\item If $c_3 \neq 0$ then $a_1 = a_2 = 0$ and maple give us 
$$
K(\Leg \FF)=\frac{P_{1}(p,q)}{q\,R_{1}(p,q)^{2}}\,dp\wedge dq
$$
with $R_{1}(p,0)=4c_{3}(b_{1}p-b_{2})^{3}$. Thus $b_1=b_2=0$ and $X_3=0$. 
\item If $c_3=0$ then  Maple give us the equality
$$K(\Leg \FF)=\frac{P_{2}(p,q)}{q\,R_{2}(p,q)^{2}}\,dp\wedge dq$$ with $R_{2}(p,0)=(a_{1}p-a_{2})^{2}$ and necessarily $a_{1}=a_{2}=0$. Again by maple we have
$$K(\Leg \FF)=\frac{P_{3}(p,q)}{q\,R_{3}(p,q)^{2}}\,dp\wedge dq$$ with $R_{3}(p,0)=4c_{2}(b_{1}p-b_{2})$. Since $\Sing(\FF)$ has no multiple components $c_{2}\neq 0$. In consequence $b_{1}=b_{2}=0$ and therefore $X_{3}=0$.
\end{enumerate}
In both cases we obtain that $X_3=0$ so that $X=H\cdot R$ and consequently
 $\Leg\FF$ is an algebraic web hence flat. Then we arrive to contradiction with the assumption $K(\Leg\FF)\not\equiv 0$.
\end{proof}

If $C$ is a  non invariant irreducible  component of the inflection divisor of a degree $3$ foliation $\FF$ on $\PP$ then we consider the curve $C^{\perp}$ consisting of those points $q$ for which there exists  $p\in C$ such that $\Tang(\FF,T_{p}\FF)=2p+q$.
In \cite[Proposition~3.5]{MarPer} it is stated that if $I(\FF)$ is reduced then a necessary condition for $\Leg\FF$ being flat is that for each non invariant irreducible component $C$ of $I(\FF)$, the curve $C^{\perp}$ is invariant by $\FF$.
Let  $I^{\mathrm{tr}}(\FF)$ be the transverse inflection divisor of $\FF$ obtained by removing from $I(\FF)$ the equations of the $\FF$-invariant lines.
As a consequence of Theorem~\ref{teorema-principal} we obtain the following characterization of the flatness of the dual web of a saturated degree $3$ foliation on $\PP$ which extends Proposition~3.5 of \cite{MarPer}.

\begin{corollary}
Let $\FF$ be a saturated degree $3$ foliation on $\PP$ with reduced transverse inflection divisor $I^{\mathrm{tr}}(\FF)$. A necessary and sufficient condition for $\Leg\FF$ being flat is that for each non invariant irreducible component $C$ of $I^{\mathrm{tr}}(\FF)$ we have that $C^{\perp}$ is invariant by $\FF$.
\end{corollary}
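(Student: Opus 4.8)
The plan is to reduce everything to Theorem~\ref{teorema-principal} and then, via projective duality, to translate the holomorphy of $K(\Leg\FF)$ along the Gauss image of an inflection component into an invariance statement on $\PP$. First note that since $\FF$ is saturated its singular set is finite, hence has no multiple components and Theorem~\ref{teorema-principal} applies: $\Leg\FF$ is flat if and only if $K(\Leg\FF)$ is holomorphic at the generic point of $\GG_{\FF}(I(\FF))$. Decompose $I(\FF)=I^{\mathrm{tr}}(\FF)+\sum_j\ell_j$, where the $\ell_j$ are the $\FF$-invariant lines; by property (c) these are exactly the invariant irreducible components of $I(\FF)$, so every irreducible component of $I^{\mathrm{tr}}(\FF)$ is non invariant. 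For an invariant line, $\GG_{\FF}(\ell_j)$ is a single point $\check\ell_j\in\Pd$, and since the polar divisor of the meromorphic $2$-form $K(\Leg\FF)$ is supported on a curve, such points are irrelevant; hence $\Leg\FF$ is flat if and only if $K(\Leg\FF)$ is holomorphic at the generic point of $\GG_{\FF}(C)$ for every irreducible component $C$ of $I^{\mathrm{tr}}(\FF)$. Observe also that $\GG_{\FF}(C)$ is genuinely a curve: were it a point, the line $T_pC$ would be tangent to $\FF$ at every $p\in C$, forcing $C$ to be $\FF$-invariant.

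Next I would describe $\Leg\FF$ near the generic point of $\GG_{\FF}(C)$. Since $I^{\mathrm{tr}}(\FF)$ is reduced, a generic $p\in C$ is an ordinary inflection point of its leaf, so $\Tang(\FF,T_p\FF)=2p+q(p)$ with $q(p)\neq p$, and as $p$ runs over $C$ the point $q(p)$ describes $C^{\perp}$. By the description of the tangent directions of a Legendre transform recalled above, at $\check\ell=\GG_{\FF}(p)$ the three directions of $\Leg\FF$ are $\check p$ (counted twice) and $\check q$, so near a generic point of $\GG_{\FF}(C)$ one may write $\Leg\FF=\WW_2\boxtimes\WW_1$, where $\WW_1$ is the foliation with direction $\check q$ and $\WW_2$ the $2$-web whose (double) direction along $\GG_{\FF}(C)$ is $\check p$; moreover $\Delta(\WW_2)=\Delta(\Leg\FF)=\GG_{\FF}(C)$ locally, since $q(p)$ stays away from $p$ and, the base point being generic, no other component of $\Delta(\Leg\FF)$ meets it. Theorem~\ref{baricentro} then yields that $K(\Leg\FF)$ is holomorphic at the generic point of $\GG_{\FF}(C)$ if and only if $\GG_{\FF}(C)$ is invariant by $\WW_2$ or by $\WW_1$.

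It remains to dualize these two conditions. Let $r=r(p)$ be the characteristic point of the $1$-parameter family of inflection tangents $\{T_p\FF:p\in C\}$, i.e. the limit of $T_p\FF\cap T_{p'}\FF$ as $p'\to p$ along $C$; the tangent line to the curve $\GG_{\FF}(C)\subset\Pd$ at $\check\ell$ is precisely the dual $\check r$ of $r$. Consequently $\GG_{\FF}(C)$ is invariant by $\WW_2$ iff $r(p)\equiv p$, which would mean that each $T_p\FF$ is tangent to $C$ and hence that $C$ is $\FF$-invariant, and is therefore excluded; while $\GG_{\FF}(C)$ is invariant by $\WW_1$ iff $r(p)\equiv q(p)$. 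On the other hand, since $q(p)\in T_p\FF$ is a tangency point of $T_p\FF$ with $\FF$, we have $T_{q(p)}\FF=T_p\FF$, so $C^{\perp}$ is invariant by $\FF$ iff for generic $p$ the tangent line to $C^{\perp}$ at $q(p)$ equals $T_p\FF$, which in turn holds iff $q(p)$ is the characteristic point $r(p)$. Combining, $K(\Leg\FF)$ is holomorphic at the generic point of $\GG_{\FF}(C)$ if and only if $C^{\perp}$ is invariant by $\FF$; conjoining over all components $C$ of $I^{\mathrm{tr}}(\FF)$ and invoking Theorem~\ref{teorema-principal} proves the corollary.

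I expect the main difficulty to be the bookkeeping in this last step: identifying the tangent to $\GG_{\FF}(C)$ with the dual of the characteristic point of the family of inflection tangents, checking carefully that the double direction of $\WW_2$ is $\check p$ (and not $\check q$), that $\Delta(\WW_2)$ really coincides with $\Delta(\Leg\FF)$ near a generic point so that Theorem~\ref{baricentro} applies verbatim, and recording the simple but essential fact that $\WW_2$ cannot be tangent to its own discriminant here. The remaining steps are formal consequences of Theorems~\ref{teorema-principal} and \ref{baricentro} together with the elementary dictionary of projective duality.
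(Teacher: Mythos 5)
Your proof is correct and follows exactly the route the paper intends: Theorem~\ref{teorema-principal} reduces flatness to holomorphy of $K(\Leg\FF)$ along $\GG_{\FF}(I(\FF))$, the invariant lines are discarded because their Gauss images are points, and for each transverse component the local decomposition $\WW_2\boxtimes\WW_1$ plus Theorem~\ref{baricentro} and the characteristic-point duality recover the criterion of \cite[Proposition~3.5]{MarPer}, which is precisely the ingredient the paper cites without reproving. The only cosmetic slip is the phrase ``the line $T_pC$ would be tangent to $\FF$'' in the argument that $\GG_{\FF}(C)$ is a curve (one should say that constancy of $\GG_{\FF}$ on $C$ forces $C$ to be an invariant line), but the conclusion and the rest of the argument are sound.
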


Another application of Theorem~\ref{teorema-principal} is the following generalization of Theorem~4.2 in \cite{MarPer} for the degree $3$ case, for which the hypothesis \emph{convex reduced foliation} has been weakened to \emph{saturated convex foliation}, i.e. we do not have to assume any more that the inflection divisor is reduced.

\begin{corollary}
If $\FF$ is a saturated convex foliation of degree $3$ on $\PP$, then $\Leg(\FF)$ is flat.
\end{corollary}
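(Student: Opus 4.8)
The plan is to derive the statement from Theorem~\ref{teorema-principal}. Recall that convexity of $\FF$ means exactly that the inflection divisor $I(\FF)$ is a union of $\FF$-invariant lines. For each such line $\ell$ the tangent line of $\FF$ along the smooth part of $\ell$ is $\ell$ itself, so the Gauss map $\GG_{\FF}$ contracts $\ell$ to the point of $\Pd$ dual to $\ell$; consequently $\GG_{\FF}(I(\FF))$ is a finite set of points and has no component of positive dimension. Since $\FF$ is saturated, $\Sing(\FF)$ has codimension two and in particular no multiple component, so Theorem~\ref{teorema-principal} applies and reduces the problem to checking that $K(\Leg\FF)$ is holomorphic at the generic point of $\GG_{\FF}(I(\FF))$ --- a condition that carries no information once $\GG_{\FF}(I(\FF))$ is zero-dimensional. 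It then follows that $\Leg\FF$ is flat.

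To turn this last remark into a genuine argument I would proceed directly. First I would observe that a saturated degree $3$ homogeneous foliation can never be convex: its inflection divisor is the line at infinity together with eight lines through the origin, while its invariant lines are only the line at infinity and the (at most four) components of the tangent cone $\{yA-xB=0\}$, so $I(\FF)$ cannot be totally invariant. Hence for our convex $\FF$ no singular point has multiplicity $3$, which would force homogeneity by the characterization of homogeneous foliations recalled earlier; thus $\nu(\FF,s)\le 2$ for every $s\in\Sing(\FF)$, and Proposition~\ref{holomorfia_singularidad_menor_2} gives that $K(\Leg\FF)$ is holomorphic at the generic point of $\check{s}$ for each such $s$, in particular along every component of $\check{\Sigma}(\FF)$. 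By Lemma~\ref{discriminant} we have $\Delta(\Leg\FF)=\GG_{\FF}(I(\FF))\cup\check{\Sigma}(\FF)$, and since $\GG_{\FF}(I(\FF))$ is finite this shows that $K(\Leg\FF)$ --- a meromorphic $2$-form whose polar set is contained in $\Delta(\Leg\FF)$ --- is holomorphic at the generic point of every one-dimensional component of $\Delta(\Leg\FF)$. Therefore its polar divisor is empty, so $K(\Leg\FF)$ is a global holomorphic $2$-form on $\Pd$, hence $K(\Leg\FF)\equiv 0$ and $\Leg\FF$ is flat.

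The only substantive input beyond earlier results is the observation that a convex degree $3$ foliation cannot be homogeneous, which removes the sole obstruction --- a singular point of multiplicity $3$ --- to applying Proposition~\ref{holomorfia_singularidad_menor_2} at every singularity; the remainder is bookkeeping with the description of $\Delta(\Leg\FF)$ in Lemma~\ref{discriminant} together with the non-existence of nonzero holomorphic $2$-forms on $\Pd$. Equivalently, one could note that for a convex foliation the transverse inflection divisor $I^{\mathrm{tr}}(\FF)$ is empty and invoke the preceding corollary; but since convexity does not force $I(\FF)$ to be reduced, I prefer to rely on Theorem~\ref{teorema-principal} directly.
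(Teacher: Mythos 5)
Your first paragraph is precisely the paper's argument (the paper states this corollary as a direct application of Theorem~\ref{teorema-principal} without further proof): convexity means every component of $I(\FF)$ is an $\FF$-invariant line, each such line is contracted by the Gauss map, so $\GG_{\FF}(I(\FF))$ is a finite set and the hypothesis of Theorem~\ref{teorema-principal} holds vacuously; saturation guarantees $\Sing(\FF)$ is finite, so the theorem applies. Had you stopped there, the proof would be complete and identical in spirit to the intended one.

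The ``direct argument'' you then substitute for it contains a genuine error. You claim that a saturated homogeneous foliation of degree $3$ can never be convex, because $I(\FF)$ consists of $\ell_{\infty}$ together with eight lines through the origin while there are at most five invariant lines. But those eight lines are counted with multiplicity (for homogeneous $X$ the affine factor of the inflection polynomial is a binary form of degree $8$ in $x,y$), and nothing forces them to be distinct: for $X=x^{3}\Dx+y^{3}\Dy$ one computes $I(\FF)=3\{x=0\}+3\{y=0\}+\{y=x\}+\{y=-x\}+\ell_{\infty}$, every component of which is invariant. This is a saturated, homogeneous, convex degree $3$ foliation with $\nu(\FF,O)=3$. (The description of $I(\FF)$ as $\ell_{\infty}$ plus eight \emph{distinct} lines is valid only in Section 3.1, under the standing assumption that $I(\FF)$ is reduced --- and the whole point of this corollary, as the paper remarks, is to drop that reducedness hypothesis.) Consequently your assertion that $\nu(\FF,s)\leq 2$ at every singularity of a convex foliation is false, and Proposition~\ref{holomorfia_singularidad_menor_2} cannot be applied at every singular point. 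Handling the multiplicity-$3$ singularities is exactly what the two lemmas preceding Theorem~\ref{teorema-principal} accomplish (in particular the Hartogs argument in Lemma~\ref{no_homogeneo_holomorfo_implica_homogeneo_holomorfo} for the case where every component of $I(\FF_h)$ is invariant, which is the relevant case here), so the corollary must be routed through the theorem rather than through a pointwise multiplicity bound. Keep your first paragraph and discard the rest.
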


\section*{Appendix: Ripoll's Maple script}

For reader's convenience we include here the maple script of O. Ripoll \cite{Rip} computing the curvature (\verb|kw(F)|) of a $3$-web presented by an implicit differential equation $F(x,y,p)=0$ with $p=\frac{dy}{dx}$.

\bigskip

\footnotesize
\begin{verbatim}
with(LinearAlgebra):kw:=proc(F) local a0,a1,a2,a3,R,alpha0,alpha1,alpha2,k: 
a3:=coeff(F,p,0):a2:=coeff(F,p,1):a1:=coeff(F,p,2):a0:=coeff(F,p,3):R:=Determinant(
Matrix([[a0,a1,a2,a3,0],[0,a0,a1,a2,a3],[3*a0,2*a1,a2,0,0],[0,3*a0,2*a1,a2,0],
[0,0,3*a0,2*a1,a2]])); alpha0:=[diff(a0,y),diff(a0,x)+diff(a1,y),diff(a1,x)+diff(a2,y),
diff(a2,x)+diff(a3,y),diff(a3,x)];alpha1:=Determinant(Matrix([alpha0,[a0,a1,a2,a3,0],
[-a0,0,a2,2*a3,0],[0,-2*a0,-a1,0,a3],[0,0,-3*a0,-2*a1,-a2]]));alpha2:=Determinant(
Matrix([alpha0,[0,a0,a1,a2,a3],[-a0,0,a2,2*a3,0],[0,-2*a0,-a1,0,a3],
[0,0,-3*a0,-2*a1,-a2]])); k:=simplify(diff(alpha2/R,x)+diff(alpha1/R,y)): end proc:
K:=proc(V) local A,B,F,k: A:=V[1]:B:=V[2]:
F:=subs([p=x,q=y,x=-p],subs(y=p*x+q,B-p*A)):k:=subs([x=p,y=q],kw(F)): end proc:
\end{verbatim}

\bigskip

\normalsize
Moreover, the command \verb|K([A,B])| computes directly 
 the curvature of the dual web of the a degree $3$ foliation $\FF$ given by the vector field $A\partial_{x}+B\partial_{y}$ in the affine coordinates $(p,q)$ of $\check{\PP}$. For instance, 
$$\verb|K([|x^{3},y^{3}-1\verb|])| ={\frac { \left( 3\,{p}^{4}+22\,{p}^{2}-10\,{q}^{3}{p}^{2}-25+18
\,{q}^{3}+7\,{q}^{6} \right) p{q}^{2}}{-3 \left( {p}^{4}-2\,{q}^{3}{p}^{
2}-2\,{p}^{2}+{q}^{6}+1-2\,{q}^{3} \right) ^{2}}}$$
times $dp\wedge dq$ is the curvature of $\Leg(x^{3}\partial_{x}+(y^{3}-1)\partial_{y})$.

\bibliographystyle{plain}
\bibliography{flat}











\end{document}